\DeclareMathOperator{\birth}{birth}
\DeclareMathOperator{\death}{death}
\DeclareMathOperator{\vol}{Vol}
\DeclareMathOperator{\logg}{\log\log}
\def\R{\mathbb{R}}
\def\T{\mathbb{T}}
\def\cG{\mathcal{G}}
\def\cL{\mathcal{L}}
\def\cP{\mathcal{P}}
\def\cT{\mathcal{T}}
\def\cL{\mathcal{L}}
\newcommand{\E}{\mathbb{E}} 
\newcommand{\given}{\;|\;}
\newcommand{\mean}[1] {\E\left\{{#1}\right\}}
\newcommand{\meanx}[1] {\E\{{#1}\}}
\newcommand{\ind}{\boldsymbol{\mathbbm{1}}} 
\newcommand{\indf}[1]{\ind\set{#1}} 
\newcommand{\var}[1]{\mathrm{Var}\param{{#1}}}
\newcommand{\set}[1]{\left\{#1\right\}}
\newcommand{\param}[1]{\left(#1\right)}
\newcommand{\prob}[1]{\mathbb{P}\left(#1\right)}
\newcommand{\probx}[1]{\mathbb{P}(#1)}
\newcommand{\cprob}[2]{\mathbb{P}\left(#1\given #2\right)} 
\newcommand{\eps}{\epsilon}
\newcommand{\by}{\mathbf{y}}
\newcommand{\bx}{\mathbf{x}}
\providecommand{\setthms}[1]{#1}
\newtheorem{lem}{Lemma}[section]
\newtheorem{thm}[lem]{Theorem}
\newtheorem{prop}[lem]{Proposition}
\newtheorem{con}[lem]{Conjecture}
\newtheorem{rem}[lem]{Remark}
\theoremstyle{definition}
\newcommand{\ninf}{n\to\infty}
\newcommand{\limninf}{\lim_{\ninf}}
\definecolor{mygreen}{rgb}{0, 0.68, 0.31}
\definecolor{myred}{rgb}{1.0, 0,0}
\numberwithin{equation}{section}
\def\bsplit#1\esplit{\begin{split} #1 \end{split} }
\def\splitb#1\splite{\begin{split} #1 \end{split} }
\def\beq#1\eeq{\begin{equation} #1 \end{equation}}
\def\eqb#1\eqe{\begin{equation} #1 \end{equation}}
\def\dgm{\mathrm{dgm}}
\newcommand{\Dgm}{\mathrm{Dgm}}
\title{Cluster-Persistence for Weighted Graphs}
\author[1,2]{Omer Bobrowski}
\author[1,3]{Primoz Skraba}
\affil[1]{School of Mathematical Sciences, Queen Mary University of London, UK}
\affil[2]{Viterbi Faculty of Electrical and Computer Engineering, Technion, Israel}
\affil[3]{Department for Artificial Intelligence, Jozef Stefan Institute, Slovenia}
\date{}
\begin{document}

\maketitle

\begin{abstract}


 
Persistent homology is a natural tool for probing the topological characteristics of weighted graphs, essentially focusing on their $0$-dimensional homology. While this area has been substantially studied, we present a new approach to constructing a filtration for cluster analysis via persistent homology. The key advantages of the new filtration is that (a) it provides richer signatures for connected components by introducing non-trivial birth times, and (b) it is robust to outliers. The key idea is that nodes are ignored until they belong to sufficiently large clusters. We demonstrate the computational efficiency of our filtration, its practical effectiveness, and explore into its properties when applied to random graphs. 
\end{abstract}

\section{Introduction}

{\bf Clustering}  data is a fundamental task in unsupervised machine learning and exploratory data analysis. It has been the subject of countless studies over the last 50 years with many definitions and algorithms proposed, e.g., ~\cite{jain_data_1999,mcinnes_umap_2018}. 
{\bf Persistent homology} \cite{edelsbrunner_computational_2010,zomorodian_topology_2005} is a powerful topological tool that provides multi-scale structural information about data. Given an increasing sequence of spaces (filtration), persistent homology tracks the formation of connected components ($0$-dimensional cycles), holes ($1$-dimensional cycles), cavities ($2$-dimensional cycles), and their higher-dimensional extensions. The information encoded in persistent homology is often represented by a \emph{persistence diagram} -- a collection of points in $\R^2$, representing the birth and death of homology classes, and providing an intuitive numerical representation for topological information (see Figure \ref{fig:example_diagram}).
The connection between clustering and $0$-dimensional  persistent homology has been well-established  under a various different scenarios including the relationship with functoriality ~\cite{carlsson_classifying_2013,carlsson_characterization_2010}, and  density-based methods~\cite{bobrowski_topological_2017,chazal_persistence-based_2013}. An important motivating factor for connecting these methods is \emph{stability}. Namely,  given small perturbations of the input data, persistent homology and can provide  guarantees on the number of the output clusters. One important  drawback of this topological approach is that statistical tests for persistent homology and clustering based on persistence, have been lacking. 

Recently, for persistent homology in dimensions $1$ and above (i.e., excluding connected components), persistent homology based on distance filtrations was experimentally shown to exhibit a strong sense of universal behavior~\cite{bobrowski_universal_2023}. Suppose we are given as input a point-cloud generated by some unknown distribution. If we compute the distance-based persistent homology,
under an appropriate transformation, the  distribution of persistence values was shown to be independent of the original point-cloud distribution.
This phenomenon was then used to develop a statistical test to detect statistically significant homology classes.  A key point in \cite{bobrowski_universal_2023} is that in order to obtain such universal behavior, the measure of persistence is given by the value of $\death/\birth$, which makes the measure of persistence scale-invariant. 

However, in distance-based filtrations, the $0$-dimensional persistent homology (tracking clusters) does not fit into this universality framework, as the birth time of all the $0$-dimensional homology classes is set to $0$. 
To address this issue, and to enable the study of universality in the context of clustering, we introduce a new filtration, which we call the $k$-cluster filtration. This is a novel, non-local, construction, where vertices become ``alive'' only once they belong to a sufficiently large cluster. In other words, while traditional persistent homology considers every vertex as an individual clusters and tracks its evolution, in the $k$-cluster filtration we only consider components of $k$ or more vertices as `meaningful' clusters. 

We note that while the motivation for this new filtration is distance-based filtration, the $k$-cluster filtration can be constructed over any weighted graph. It generally provides two key advantages to the traditional filtration. Firstly, it results in a `richer' persistence diagram, in the sense that components have non-trivial birth times. This improves our ability to compare between the different features within the same diagram, or across different diagrams. Secondly, the $k$-cluster filtration provides a more `focused' view on connected components, by discarding those that are considered small (determined by application). In particular it easily allows to remove outliers from the persistence diagram.

The paper is organized as follows. Section \ref{sec:prelim} provides essential background about persistent homology. In Section \ref{sec:filt} we introduce the $k$-cluster filtration and give some preliminary properties. In Section \ref{sec:alg} we provide an algorithm for computing the filtration and corresponding persistence diagram in a single pass. In Section \ref{sec:exp} we show some experimental results comparing the clustering method to some other approaches. Finally, in Section \ref{sec:prob} we discuss some probabilistic aspects of this filtration, in comparison with known properties of random graphs and simplicial complexes.

\paragraph{Remark.} 
As we deal exclusively with $0$-dimensional homology, we phrase all the statements in this paper terms of weighted graphs rather than simplicial complexes. 

\section{Graph Filtrations and Persistent Homology}\label{sec:prelim}

In this section, we introduce the required topological notions. As we focus on the special case of graphs and connected components, i.e. 0-dimensional homology, we restrict our definitions to this case. For a general description of $k$-dimensional homology
we refer the reader to \cite{hatcher_algebraic_2002, munkres_elements_1984}.  

Let $G = (V,E)$ be an undirected  graph. Our main object of study is a \emph{graph filtration} or an increasing sequence of graphs.  
This can be constructed by defining a function $\tau:(V\cup E)\to [0,\infty)$, under the restriction that if $e=(u,v)\in E$, then $\tau(e) \ge \max(\tau(u),\tau(v))$. This restriction ensures that the sublevel sets of $\tau$ define a subgraph.
The filtration  $\set{\cG_t}_{t\ge 0}$ is then defined via
\[
\cG_t = \set{\sigma \in V\cup E : \tau(\sigma) \le t}.
\]
As we increase $t$ from $0$ to $\infty$, we can track connected components of $\cG_t$ as they appear and merge, which are referred to as \emph{birth} and \emph{deaths}, respectively. When two components merge, we use the `elder rule' to determine that the later born component is the one that dies. Note that at least one component has an infinite death time in any graph filtration. We refer the reader to \cite{edelsbrunner_computational_2010} for further details on this. 

These birth-death events can be tracked by an algebraic object called a $0$-dimensional persistent homology. Its most common visualization is via a \emph{persistence diagram} -- a collection of points in $\R^{2+}$, where each corresponds to a single connected component. The coordinates of a points encode the information with the $x$-coordinate representing the birth time, and the $y$-coordinate representing the death time. An example for a function on a line-graph is shown in Figure~\ref{fig:example_diagram}. Note that one component is  infinite which we  denote with a dashed line at the top of the diagram.  

In a more general context,
given a filtration of higher-dimensional objects (e.g., simplicial complexes), we can study the $k$-dimensional persistent homology. This object tracks the formation of $k$-dimensional cycles (various types of holes), and its definition is a natural extension of the $0$-dimensional persistent homology we study here. We refer the reader to \cite{edelsbrunner_computational_2010} for more information.

\begin{figure}[htbp]
 \centering
  \includegraphics[width=0.95\textwidth]{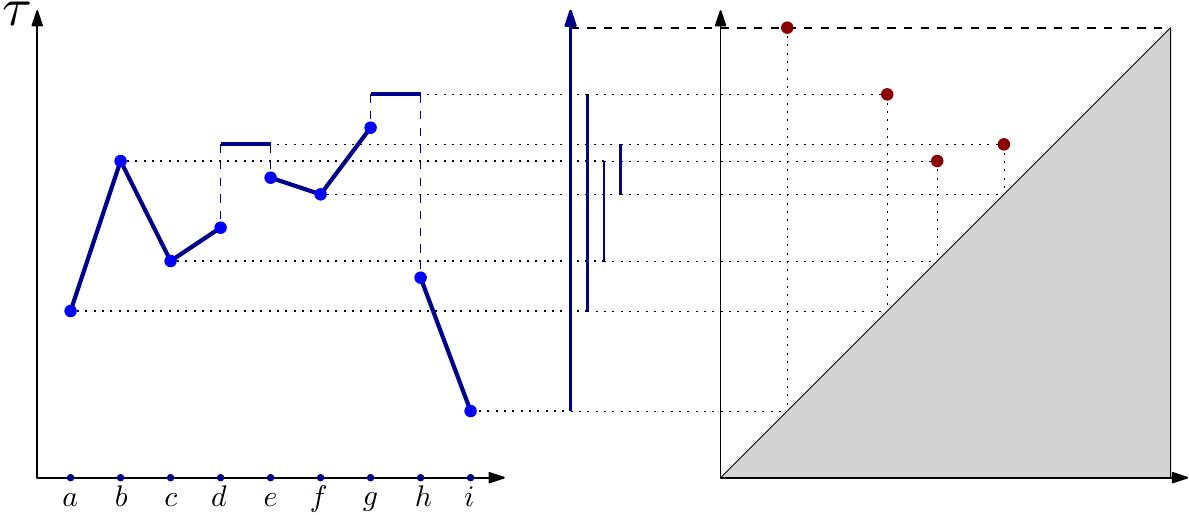}
    \caption{\label{fig:example_diagram}
    An example of a graph filtration on a line-graph. The filtration value of the vertices are given by $\tau$ (the $y$-axis). The filtration value of each edge is taken as the highest value between its plotted endpoints.    
    The bars in the middle represent the tracking of the components. The vertices which are local minima, i.e. $a$, $c$, $f$, and $i$, generate new components and so $\tau(a)$, $\tau(c)$, $\tau(f)$, and $\tau(i)$ correspond to birth times. The first merge occurs at $\tau(b) = \tau((a,b)) = \tau((b,c))$ merging 
    $\set{a}$ with $\set{c,d}$. In this case we declare the latter as dead since $\tau(a)<\tau(c)$. Next, at $\tau((d,e))$, the components $\set{a,b,c,d}$ and $\set{e,f}$ are merged, and the latter dies. Finally, at $\tau((g,h))$, the components $\set{a,b,c,d,e,f,g}$ and $\set{h,i}$ are merged, killing the former. The component containing $i$ has  the earliest birth time, and thus is declared  infinite.  }
\end{figure}

\section{The \texorpdfstring{$k$}{k}-Cluster Filtration}\label{sec:filt}

Let $G = (V,E,W)$ be an undirected weighted graph.
In computing $0$-dimensional persistent homology, the filtration values are commonly taken to be $\tau(v) = 0$ for all $v\in V$, and $\tau(e) = W(e)$ for all $e\in E$. We will denote this filtration by $\cG^*_t$. In other words, we assume all vertices are present at time zero, and edges are gradually added according to the weight function $W$. This has been the practice in the TDA literature in almost all studies, and in particular in the geometric settings where $W$ represents the distance between points (i.e., the geometric graph, which is the skeleton of both the \v Cech and Vietoris-Rips complexes). While in many models, this choice of $\tau$ seems reasonable, it has two significant drawbacks:
\begin{itemize}
    \item The produced persistence diagrams are  \emph{degenerate}, as the birth times of all 0-cycles is $t=0$. This significantly reduces the amount of information we can extract from persistence diagrams.
    \item The generated persistence diagrams are \emph{superfluous}, in the sense that they contains a point for each vertex $V$, while obviously not all vertices contribute significant structural information.
\end{itemize}
In this paper we propose a  modification to the standard graph filtration, that will resolve both of these issues, and will lead to a more concise and informative persistence diagrams. 

We will first define the filtration value for the vertices.
For every vertex, and a value $t>0$ we define
$N_t(v)$ to be the number of vertices in the connected component of $\cG^*_t$ that contains $v$. Fix $k\ge 1$, and define
\eqb\label{eqn:def_tau_v}
\tau_k(v) := \inf \set{t : N_t(v) \ge k}.
\eqe
The edges values are then
\eqb\label{eqn:def_tau_e}
\tau_k((u,v)) = \max(\tau_k(u),\tau_k(v), W((u,v))).
\eqe
Denoting the corresponding filtration by $\cG^{(k)}_t$, note that $\cG^{(1)}_t \equiv \cG^*_t$. 
In other words, compared to $\cG^*_t$, in $\cG_t^{(k)}$ we delay the vertices appearance, until the first time each vertex is contained in a component with at least $k$ vertices (and adjust the edge appearance to be compatible).
Effectively,  the assignment of the new filtration values to the vertices introduces two changes to the persistence diagrams:
\begin{enumerate}
    \item All the points that are linked to components of size smaller than $k$ are removed.
    \item Each birth time corresponds to an edge merging two components $C_1,C_2$ in $\cG_t^*$, such that $|C_1|,|C_2| < k$, and $|C_1|+|C_2|\ge k$.
    \item Each death time corresponds to an edge merging two components larger than $k$.
\end{enumerate}
We call this filtration the `$k$-cluster filtration', to represent the fact that it tracks the formation and merging of clusters of size at least $k$. The parameter $k$ determines what we consider as a sufficiently meaningful cluster. In $\cG_t^*$, every vertex is considered a  cluster, but statistically speaking, this is an overkill. The chosen value of $k$ should depend on the application as well as the sample size.

We conclude this section showing that the $k$-cluster filtrations are decreasing (in a set sense) as we increase $k$. This can be useful, for example, in the context of multi-parameter persistence, which we briefly mention but leave for future work.

\begin{lem} The filtrations $\cG_t^{(k)}$ are decreasing in $k$, i.e., 
$$\tau_{k-1}(x) \leq \tau_k(x),\quad \forall x\in V\cup E.$$

\end{lem}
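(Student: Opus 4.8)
The plan is to reduce everything to a single monotonicity observation about the counts $N_t(v)$ and then propagate it from vertices to edges. I would split the argument into the vertex case and the edge case, since the edge values are defined in terms of the vertex values through \eqref{eqn:def_tau_e}.

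First I would record the key fact that, for each fixed $v$, the map $t \mapsto N_t(v)$ is non-decreasing. The reason is that $\cG^*_t$ is a filtration, so as $t$ grows we only add edges; the connected component of $v$ in $\cG^*_t$ can thus only absorb additional vertices and never lose any, forcing $N_t(v)$ to be a non-decreasing (integer-valued) step function of $t$. This monotonicity is really the only substantive input.

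Given this, the vertex inequality is a subset-infimum argument. Since $N_t(v) \ge k$ trivially implies $N_t(v) \ge k-1$, the threshold sets satisfy $\set{t : N_t(v) \ge k} \subseteq \set{t : N_t(v) \ge k-1}$. Taking an infimum over a smaller set can only make it larger, so from \eqref{eqn:def_tau_v} we obtain $\tau_{k-1}(v) \le \tau_k(v)$. (When the smaller set is empty, i.e.\ the component of $v$ never reaches size $k$, the infimum is $+\infty$ by convention and the inequality is immediate.)

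For the edge case, I would fix $e = (u,v)$ and simply invoke monotonicity of the maximum. By the vertex result $\tau_{k-1}(u) \le \tau_k(u)$ and $\tau_{k-1}(v) \le \tau_k(v)$, while $W(e)$ does not depend on $k$; hence $\max(\tau_{k-1}(u), \tau_{k-1}(v), W(e)) \le \max(\tau_k(u), \tau_k(v), W(e))$, which by \eqref{eqn:def_tau_e} is exactly $\tau_{k-1}(e) \le \tau_k(e)$. I do not anticipate any real obstacle: the entire content is the monotonicity of $N_t(v)$, and the only place warranting a word of care is the empty-threshold-set (infinite infimum) edge case in the subset-infimum step.
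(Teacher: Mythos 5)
Your proof is correct and takes essentially the same route as the paper's: the pointwise implication $N_t(v)\ge k \Rightarrow N_t(v)\ge k-1$ yields $\tau_{k-1}(v)\le\tau_k(v)$ via the infimum in \eqref{eqn:def_tau_v}, and the edge inequality then follows from monotonicity of the maximum in \eqref{eqn:def_tau_e}. The only stylistic point is that your opening observation that $t\mapsto N_t(v)$ is non-decreasing is never actually used --- the subset-infimum step needs only the pointwise implication --- so it can be dropped.
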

\begin{proof}
For any vertex $v\in V$, if $|N_t(v)|\geq k$, then $|N_t(v)|\geq k-1$. From \eqref{eqn:def_tau_v} we therefore have that $\tau_{k-1}(v) \le \tau_k(v)$. Using \eqref{eqn:def_tau_e}, we have $\tau_{k-1}(e) \le \tau_k(e)$ for all $e\in E$.
\end{proof}

\section{Algorithm}\label{sec:alg}

In this section, we describe an efficient one-pass algorithm for computing the filtration and persistence diagram at the same time. The time complexity of the algorithm is $O(|E|\times \alpha(|V|))$, where $\alpha(\cdot)$ is the inverse Ackermann function \cite{cormen_introduction_2022}. This is the same complexity as computing the 0-dimensional persistence diagram if we were given the filtration as input. 

We begin with the (standard) terminology and data structures. For simplicity of the description, we assume that the weights on the edges are unique and the vertices have a lexicographical order. We first define a total order on the vertices as follows: the filtration function determines the ordering. Undefined filtration functions are assumed to be $\infty$. If the function is the same or undefined for both vertices, the order is then determined by lexicographical ordering. It is straightforward to check this is a total ordering. 

\begin{rem}
In the case of a total ordering, one can choose a representative of 0-dimensional persistent homology classes -- notably, in the total ordering a unique vertex is the earliest generator for the homology class (i.e., the cluster) which we denote as the canonical representative of the persistent component. 
\end{rem}

To track components as we proceed incrementally through the filtration, we use the union-find data structure, which supports two operations: 
\begin{itemize}
    \item $\text{ROOT}(v)$: returns the canonical representative for the connected component containing $v$.
    \item $\text{MERGE}(u,v)$: merges the connected components containing $u$ and $v$ into one component -- including updating the root.
\end{itemize}
We augment the data structure by keeping track of two additional records:
\begin{itemize}
    \item $\text{SIZE}(v)$: returns the size of the connected component containing $v$.
    \item $\text{COMPONENT}(v)$: returns the list of vertices  in the same component as $v$. 
\end{itemize}
To track the size of the component, we store the size at the root (i.e., the canonical representative) of each component, updating each time a merge occurs. 
To access a connected component, recall that the union-find data structure is implemented as a rooted tree. For each vertex, we store a list of children in the tree. To recover the list of vertices in the component, we perform a depth-first search of the tree starting from the root (although any other traversal method could be used). 
All update operations have $O(1)$ cost (cf., \cite{cormen_introduction_2022}).

Note that when $k=1$, the filtration value for all vertices is $0$ and so the problem reduces to finding the minimum spanning tree of a weighted graph.
Hence, we will assume that $k>1$. Initially, we set the filtration function $\tau(v)=0$ for all vertices, and $\tau(e) = W(e)$ for all edges, and assume the edges are sorted by increasing weight. Note that if this is not the case, this step will be the bottleneck, with a cost of $O(|E|\log |E|)$. Thus, we begin with a forest where each component is a single vertex, i.e. all components are initially born at 0. 

We proceed as in the case of standard  0-dimensional persistence, adding edges incrementally. As no components are added, we are only concerned with merges, the problem is reduced to updating the birth times as we proceed by keeping track of ``active" components (i.e., larger than $k$). We omit points in the persistence diagram which are on the diagonal (birth=death), but these can be included with some additional book-keeping. 

Assume we are adding the edge $e = (u,v)$. If $e$ is internal to a connected component (i.e., $\text{ROOT}(u) = \text{ROOT}(v)$), then it does not affect the $0$-persistence. Otherwise, it connects two components denoted $C_u, C_v$. There are a few cases to consider:
\begin{enumerate}
\item $|C_u\cup C_v| < k$: The merged component is too small to affect the persistence diagram.
We only perform a merge of the components.
\item $|C_u\cup C_v| \geq k$ and  $|C_u| < k$: In this case, $C_u$ becomes active. Thus, we merge the components, and update the value of $\tau$ for all vertices in $C_u$.
$$\tau(x) \leftarrow W(e) \quad\quad \forall x\in C_u $$
is performed. We take similar action if $|C_v|<k$ (or both are less than $k$).
\item $|C_u|,|C_v| \geq k$: Both components are already active and so a new point $(\birth,\death)$ is added to the persistence diagram, with 
\[
\splitb
\birth &= \max\set{\tau(\text{ROOT}(u)), \tau(\text{ROOT}(v))},\\
\death &= W(e).
\splite 
\]
The components are again merged. We note that for any $v$,
$$ \tau(\text{ROOT}(v)) = \min_{x\in C_v} \tau(x).$$
\end{enumerate}
The full procedure is given in Algorithm \ref{alg:main}. Note that we only compute the filtration for the vertices, as the correct edge values can then be computed by Equation \ref{eqn:def_tau_e}.
\begin{algorithm}
\caption{\label{alg:main}One-pass Algorithm}
\begin{algorithmic}[1]
\State $G=(V,E,W)$
\State $\tau: V \rightarrow [0,\infty) $
\State Initialize union-find data structure: $\text{ROOT}(v)=v$ for all $v\in V$
\State $\Dgm,\text{MST}=\emptyset$
\For{$e=(u,v)\in E$} 
     \If{$\text{ROOT}(u)\neq \text{ROOT}(v)$}
        \State $\text{MST}\leftarrow \text{MST}\cup e$
        \If{$\text{SIZE}(u)+\text{SIZE}(v)>k$}
        \If{$\text{SIZE}(u)<k$}
            \For{$x\in \text{COMPONENT}(u)$}
                \State $\tau(x)\leftarrow W(e)$ 
            \EndFor
        \EndIf
        \If{$\text{SIZE}(v)<k$}
            \For{$x\in \text{COMPONENT}(v)$}
                \State $\tau(x)\leftarrow W(e)$
            \EndFor
        \EndIf
        \If{$\text{SIZE}(u),\text{SIZE}(v) \ge k$}
            \State $\birth = \max\{\tau(\text{ROOT}(u)),\tau(\text{ROOT}(v))\}$
            \State $\death = W(e)$
            \State $\Dgm \leftarrow \Dgm \cup (\text{birth},\death) $ 
        \EndIf
        \EndIf
        \State $\text{MERGE}(u,v)$
     \EndIf
 \EndFor \\
 \Return $\Dgm, \text{MST}, \tau$
\end{algorithmic}
\end{algorithm}

\paragraph{Proof of Correctness.} 
We first argue that the function $\tau$ is correctly computed. This follows directly from the fact that  the algorithm explicitly tests when the component contains at least $k$ vertices. The fact that the persistence diagram is correctly computed is a consequence of the following result.
\begin{lem}
The minimum spanning tree for $k=1$ is a minimum spanning tree for any $k$.
\end{lem}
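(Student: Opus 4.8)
The plan is to prove the statement through the classical cycle-optimality characterization of minimum spanning trees: a spanning tree $T$ is a minimum spanning tree for a weight function $\omega$ if and only if, for every non-tree edge $e=(u,v)$, one has $\omega(f)\le\omega(e)$ for every edge $f$ lying on the fundamental cycle of $e$ (equivalently, on the unique $u$--$v$ path $P_T(u,v)$ in $T$). Let $T$ denote the minimum spanning tree for $k=1$, i.e.\ the ordinary $W$-MST, whose edge weights are simply $W(e)$. To conclude that $T$ is also a minimum spanning tree for the weights $\tau_k$, it suffices to verify this cycle condition for the function $\tau_k$: for each non-tree edge $e=(u,v)$ and each edge $f\in P_T(u,v)$, we must show $\tau_k(f)\le\tau_k(e)$.

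First I would record what optimality of $T$ for $W$ already gives. Since $T$ is the $W$-MST, every edge $f$ on the fundamental cycle of a non-tree edge $e$ satisfies $W(f)\le W(e)$ (strictly, under the distinct-weights assumption). Consequently every vertex on $P_T(u,v)$ is joined to both $u$ and $v$ using only edges of $W$-weight at most $s:=\max_{f'\in P_T(u,v)}W(f')<W(e)$. In particular, writing $f=(a,b)$, all four vertices $u,v,a,b$ lie in a single connected component of $\cG^*_s$.

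The heart of the argument is the following comparison of vertex birth times, which I would isolate as a lemma: if two vertices $x,y$ lie in the same connected component of $\cG^*_s$, then
\[
\max(\tau_k(x),s)=\max(\tau_k(y),s).
\]
To see this, note that $x$ and $y$ share a component of $\cG^*_t$ for every $t\ge s$, so $N_t(x)=N_t(y)$ there; since $N_t(\cdot)$ is nondecreasing in $t$, the definition of $\tau_k$ in \eqref{eqn:def_tau_v} forces the two birth times to coincide once both are at least $s$, and to be capped at $s$ otherwise. The only delicate point is that one of the vertices, say $y$, might reach a component of size $k$ early, before scale $s$, through a route not involving $x$; but then $N_s(x)=N_s(y)\ge k$ forces $\tau_k(x)\le s$ as well, so both sides equal $s$. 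This is the step I expect to be the main obstacle, precisely because the birth times of two eventually-merged vertices need not be equal outright, only after truncation at the scale $s$ at which they become connected.

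Finally I would combine these ingredients. Applying the lemma to the common component of $\cG^*_s$ containing $u,v,a,b$, all four of $\max(\tau_k(u),s),\max(\tau_k(v),s),\max(\tau_k(a),s),\max(\tau_k(b),s)$ equal a common value $M$. Using $W(f)\le s$ gives
\[
\tau_k(f)=\max(\tau_k(a),\tau_k(b),W(f))\le\max(\tau_k(a),\tau_k(b),s)=M,
\]
while $W(e)>s$ gives $\tau_k(e)=\max(\tau_k(u),\tau_k(v),W(e))\ge\max(\tau_k(u),s)=M$. Hence $\tau_k(f)\le M\le\tau_k(e)$, which is exactly the cycle condition for $\tau_k$, completing the proof. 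I would note that the argument never uses distinctness of the $\tau_k$-values, so it yields that $T$ is \emph{a} (not necessarily unique) $\tau_k$-MST, which is all the statement claims; ties among the $\tau_k(e)$, which can be created by the maxima in \eqref{eqn:def_tau_e}, are therefore harmless.
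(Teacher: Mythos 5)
Your proof is correct, but it takes a genuinely different route from the paper's. The paper argues operationally, in the spirit of Kruskal's algorithm: until a component reaches $k$ vertices, all of its edges are assigned the same $\tau_k$-value (the activation time of the component), so within a not-yet-active component any spanning tree is minimal; edges joining already-active components keep their original weights $W(e)$, so the choices made by the $W$-MST remain optimal there. That argument is two sentences long and leaves the details implicit. You instead verify the cycle-optimality criterion for $\tau_k$ directly, and the work is concentrated in your truncation lemma $\max(\tau_k(x),s)=\max(\tau_k(y),s)$ for vertices in a common component of $\cG^*_s$ --- which is precisely the point the paper's sketch glosses over, since two vertices merged into one cluster need not share a birth time (one may have activated earlier through a larger cluster), and only the values truncated at the merging scale agree; your case analysis handles this correctly, since $N_s(x)=N_s(y)\ge k$ does force both birth times below $s$. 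What your route buys is a self-contained, fully checkable proof that never refers to the algorithm and explicitly accommodates the ties that \eqref{eqn:def_tau_e} inevitably creates (you conclude only that $T$ is \emph{a} $\tau_k$-MST, which is all the lemma claims); what the paper's route buys is brevity and a transparent explanation of why the one-pass procedure of Section \ref{sec:alg} can build the MST and the diagram simultaneously. One minor remark: in your last display the bound $\tau_k(e)\ge\max(\tau_k(u),s)$ only needs $W(e)\ge s$, so even the strictness you draw from the distinct-weights assumption is not essential to your argument.
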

\begin{proof}
The key observation is that until a component contains $k$ vertices, any spanning tree is a minimum spanning tree, as all the edges will be assigned the value when the component becomes active. The remaining edges do not have their values changed and so remain in the MST. 
\end{proof}
The equivalence of the MST and the persistence diagram~\cite{skraba_randomly_2017} then implies correctness of the algorithm.

\paragraph{Proof of Running Time.}
The analysis of the merging is covered verbatim from the standard analysis of the union-find data structure. As described above, the update to the size of the component and updating the list of children in the merge are $O(1)$ operations. All that remains is to prove  is  the cost of updating the function $\tau$. We observe that each vertex is only updated once. This therefore has a total cost of $O(|V|)$, and the edges can be updated at a cost of $O(1)$ per edge (however, there is no practical need for that). This implies the overall running time is $O(|E|\times \alpha(|V))$.

\paragraph{Extracting the Clusters.} To obtain clusters, we can use the algorithm in \cite{chazal_persistence-based_2013}. This algorithm extracts the $\ell$-most persistent clusters by performing merges only when the resulting persistence is less than a threshold. This threshold can be chosen such that there are only $\ell$ points above the threshold in the  diagram. Finally, we note that the cluster extraction can be done on the MST rather than the full graph.

\section{Experiments and Applications}\label{sec:exp}
\subsection{Simulated point-clouds}

We start by generating point-clouds from a mixture of Gaussians, resulting in several blobs of points (Figure~\ref{fig:example_points}). 
We first show the effect of the parameter $k$ on the filtration function and the corresponding persistence diagrams. For the two point-clouds in Figure~\ref{fig:example_points}, we show the resulting persistence diagrams for the $k$-cluster filtrations in Figure~\ref{fig:k-comp-rel}. Notice that the correct number of persistent clusters is evident, especially for $k=10,20,$ and $50$. 
 An important phenomenon that is evident in the figures is that higher values of $k$ filter out more of the `noise'. 

\begin{figure}
 \centering
  \includegraphics[width=0.75\textwidth]{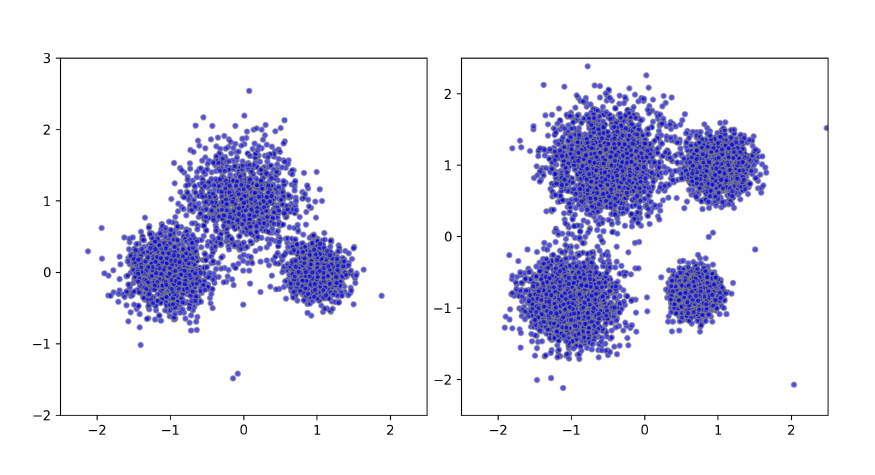}
    \caption{\label{fig:example_points}
    Two examples point-clouds consisting of an i.i.d. sampling from  a mixture of three and four Gaussians  and consisting of 1000 and 2000 points respectively.}
\end{figure}

\begin{figure}
 \centering
  \includegraphics[width=0.99\textwidth]{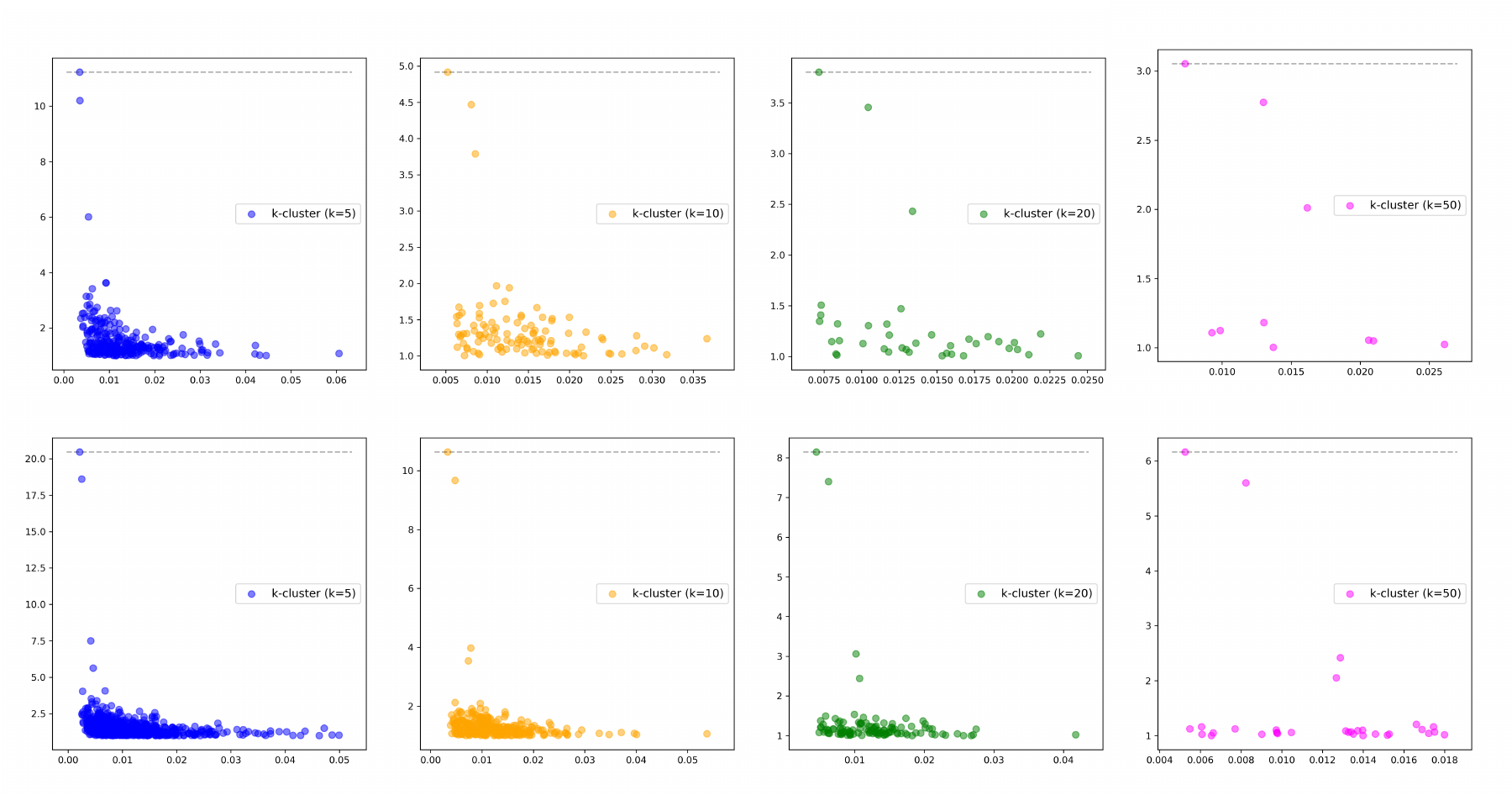}
    \caption{\label{fig:k-comp-rel} The persistence diagrams with death/birth on the $y$-axis with different choices of $k$ for the points sampled from the two mixtures of Gaussians (top row) 3 blobs (bottom row) 4 blobs. Note that the number of outstanding features in the diagrams correspond to the number of clusters in the data.}
\end{figure}

\begin{figure}
 \centering
  \includegraphics[width=0.90\textwidth]{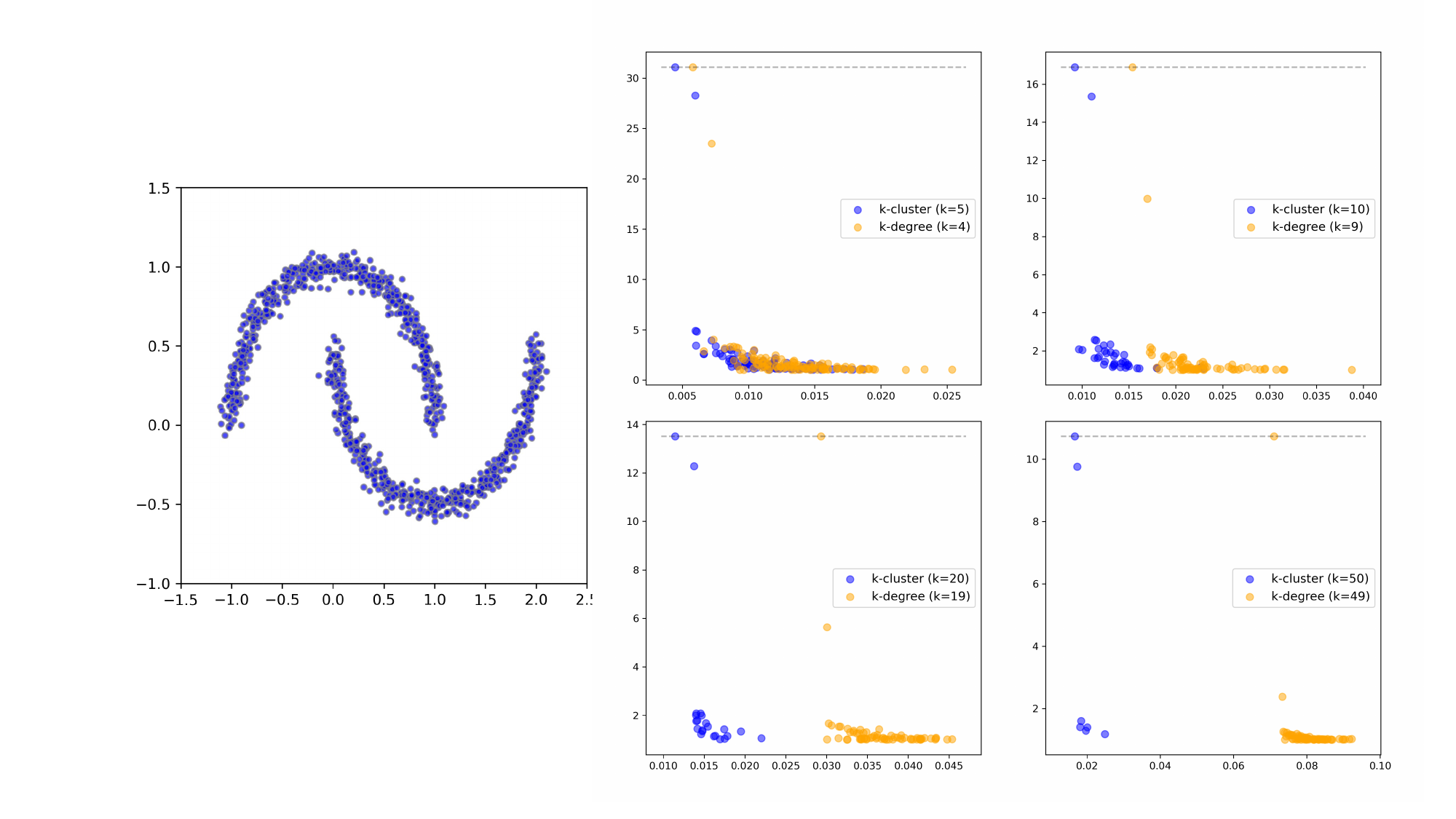}
    \caption{\label{fig:twomoons} A comparison of the $k$-cluster and $k$-degree for the two moons data set. On the right we have the death/birth ratios for different values of $k$.  }
\end{figure}


To place the behaviour of the persistence diagrams into further context, we compare the $k$-cluster filtration with a related construction from the applied topology literature, which has been suggested for dealing with outliers in clustering (and in higher homological dimensions) -- the $k$-degree Vietoris-Rips filtration~\cite{lesnick_interactive_2015}. Given a weighted graph $G = (V,E,W)$, we define the $k$-degree filtration, denoted $\delta_k:(V\cup E)\to [0,\infty)$ as follows. For every vertex $v\in V$ we take $\delta_k(v)$ to be its $k$-nearest neighbor distance. The values of the edges, is then determined the same as in \eqref{eqn:def_tau_e}. The $k$-degree filtration has been used in the context of multi-parameter persistence, with the bifiltration induced by decreasing $k$ and increasing the edge weight (commonly, Euclidean distance).
In this paper, we do not explore the multi-parameter setting. Rather, we focus the properties of the persistence diagrams for a fixed $k$. 
We make two observations before investigating the differences:
\begin{enumerate}
\item The $k$-degree filtration function is determined completely by the local neighborhood of a vertex (i.e., its immediate neighbors in the graph). The same is not true for the $k$-cluster filtration.
\item For a fixed value of $k$ we have $\tau_k(v)\le \delta_{k-1}(v) $ for all $v\in V$. In other words, the value of $k$-cluster function is less than or equal to than the value of the $(k-1)$-degree function. This follows from the fact that if a vertex has $k-1$ neighbors, then it is part of a cluster of at least $k$ vertices. 
\end{enumerate}

In Figure \ref{fig:twomoons}, we show the relative persistence diagrams for two non-convex clusters for both the $k$-degree and $k$-cluster filtrations, for different values of $k$. In this example, especially for larger $k$, the  persistent clusters are much more prominent in the $k$-cluster filtration compared to the $k$-degree filtration. This  may be explained by the fact that a much larger radius is needed to obtain the required number of neighbors.  In Figure~\ref{fig:deg_vs_cluster}, we show the same comparison for relative persistence diagrams for 3 and 4 blobs, where the difference between the two methods is less clear. However,  Figure \ref{fig:kstability} highlights an additional  difference in the behaviors of the two filtrations. In this figure, we compare the persistence (death/birth) for the second most persistent cluster, for a wide range of $k$ values. In the left and center plots, the second most persistent cluster corresponds to a true cluster in the data. We observe that the persistence value decays much more slowly for the $k$-cluster filtration, i.e. the true cluster remains more persistent for increasing values of $k$. The plot on the right presents the same comparison, but for uniformly distributed random points. In this case, the second most persistent cluster is by construction noise (i.e., not a real cluster in the data). Here although the $k$-cluster filtration decays more slowly, it is comparable to the $k$-filtration. Hence we can conclude that persistent clusters show a more stable behavior over ranges of $k$ for the $k$-cluster filtration compared to the $k$-degree filtration.

\begin{figure}
 \centering
  \includegraphics[width=0.99\textwidth]{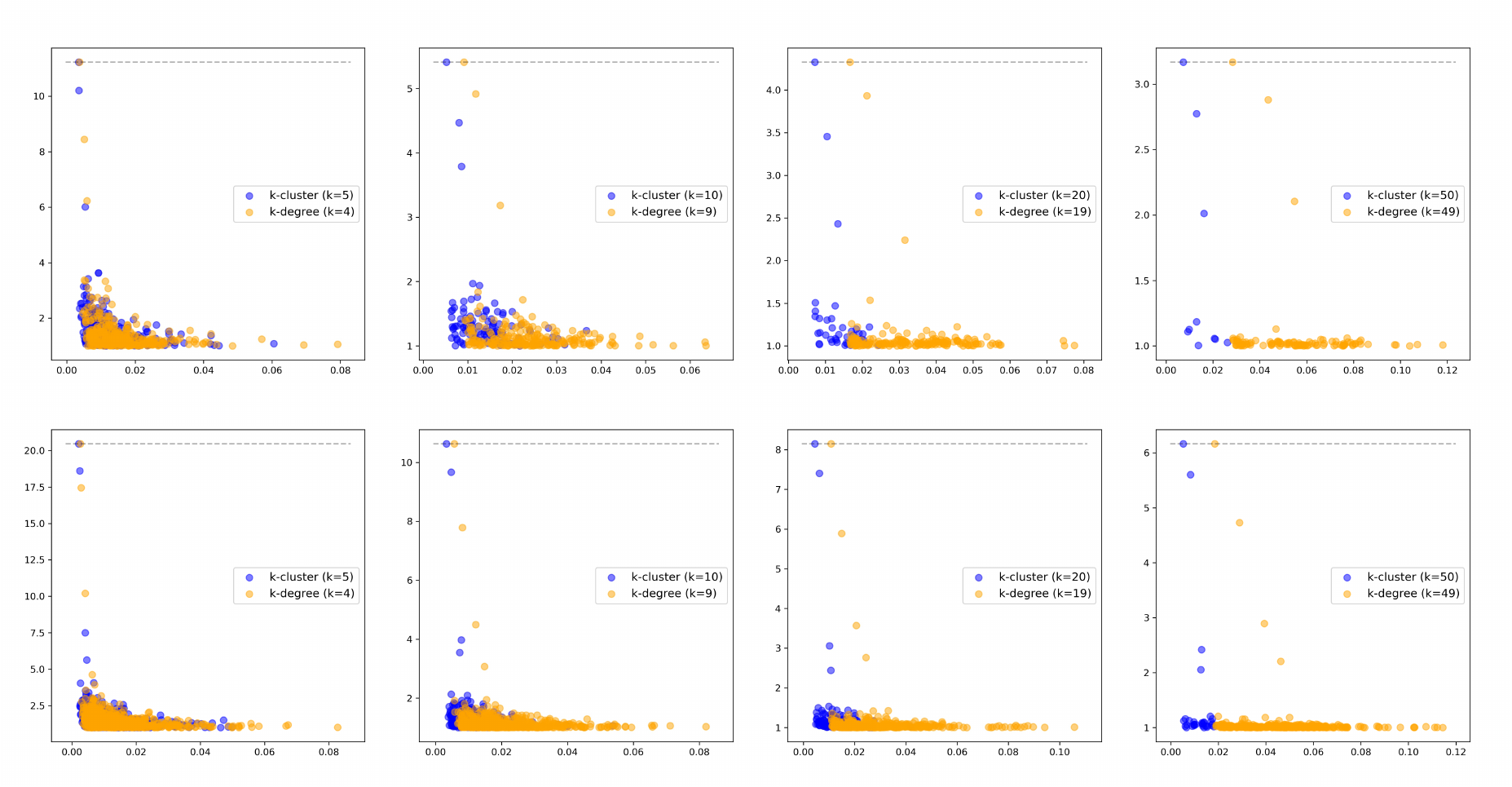}
    \caption{\label{fig:deg_vs_cluster} (top row) 3 blobs, (bottom row) 4 blobs.  The relative persistence diagrams for each point cloud, with the $k$-degree filtration in yellow and the $k$-cluster filtration in blue for $k=5,10,20,$ and $50$.  }
\end{figure}

\begin{figure}
 \centering
  \includegraphics[width=0.90\textwidth]{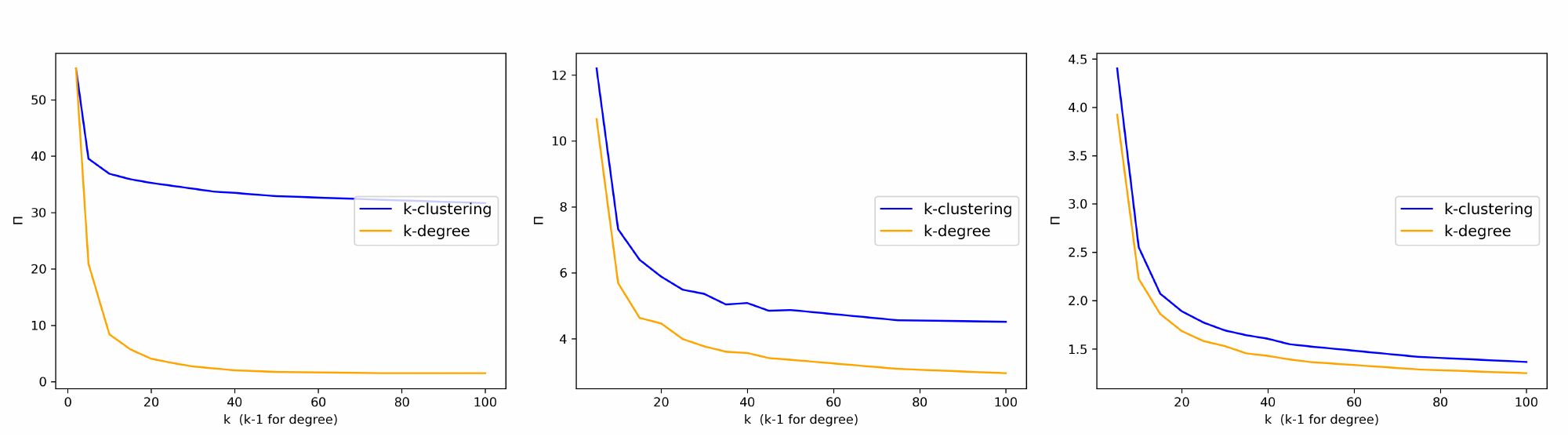}
    \caption{\label{fig:kstability} The effect on the second most persistent cluster for different values of $k$. On the left and center, this corresponds to a true cluster (left -- two moons and center -- mixture of 3 Gaussians). On the right --uniform random points. Here the noise cluster drops nearly as quickly in both cases. }
\end{figure}

\subsection{Universality}\label{sec:univ}

In \cite{bobrowski_universal_2023}, we published a comprehensive experimental work, showing that the distribution of persistence values is universal. We consider a persistence diagram as a finite  collection of points in $\R^2$, $\dgm = \set{(b_1,d_1),\ldots, (b_M,d_M)}$. For each point $p_i = (b_i,d_i)$ we consider the multiplicative persistence value $\pi(p_i) = d_i/b_i$. Our goal is to study the distribution of the $\pi$-values across an entire diagram.

Our results in \cite{bobrowski_universal_2023} are divided into two main parts. Given a point cloud of size $n$, we compute the persistence diagram for either the \v Cech or the Vietoris-Rips filtrations.
In \emph{weak universality} we consider the empirical measure
\[
\Pi_{n} := \frac{1}{|\dgm_k|}\sum_{p\in\dgm_k}\delta_{\pi(p)},
\]
and we conjecture that for iid samples, we have
\[
\limninf \Pi_n = \Pi^*_{d,\cT,k},
\]
where $d$ is the dimension of the point-cloud, $k$ is the degree of homology, and $\cT$ is the filtration type (i.e., \v Cech or Vietoris-Rips). In other words, the limiting distribution for the $\pi$-values depends on $d,k,\cT$ but is  independent of probability distribution generating the point-cloud.

In \emph{strong universality} we present a much more powerful and surprising conjecture. Here, we define $\ell(p) := A\logg(\pi(p)) + B$ (the values of $A$ and $B$ are speficied in \cite{bobrowski_universal_2023}), and the empirical measure
\[
\cL_n := \frac{1}{|\mathrm{dgm_k}|}\sum_{p\in\dgm_k}\delta_{\ell(p)}.
\]
Our conjecture is that for wide class of random point-clouds (including non-iid and real-data), we have 
\[
\limninf \cL = \cL^*,
\]
where $\cL^*$ is a unique universal limit. Furthermore, we conjecture that $\cL^*$ might be the left-skewed Gumbel distribution. 

Originally, the results in \cite{bobrowski_universal_2023} are irrelevant for the $0$-th persistence diagram of random point-clouds, as the birth times are all zeros. However, once we replace the standard filtration with the $k$-cluster filtration, we have a new persistence diagrams with non-trivial birth time that we can study.  In Figure \ref{fig:universal} we demonstrate both weak and strong universality properties for the $k$-cluster persistent homology. We generated iid point-clouds across different dimensions, with different distributions (uniform in a box, exponential, normal). The results show that both weak and strong universality hold in these cases as well. We note that for weak universality, the limiting distribution depends on both $d$ (dimension of point-cloud) and $k$ (minimum cluster size).

\begin{figure}
    \centering
    \includegraphics[width=0.99\textwidth]{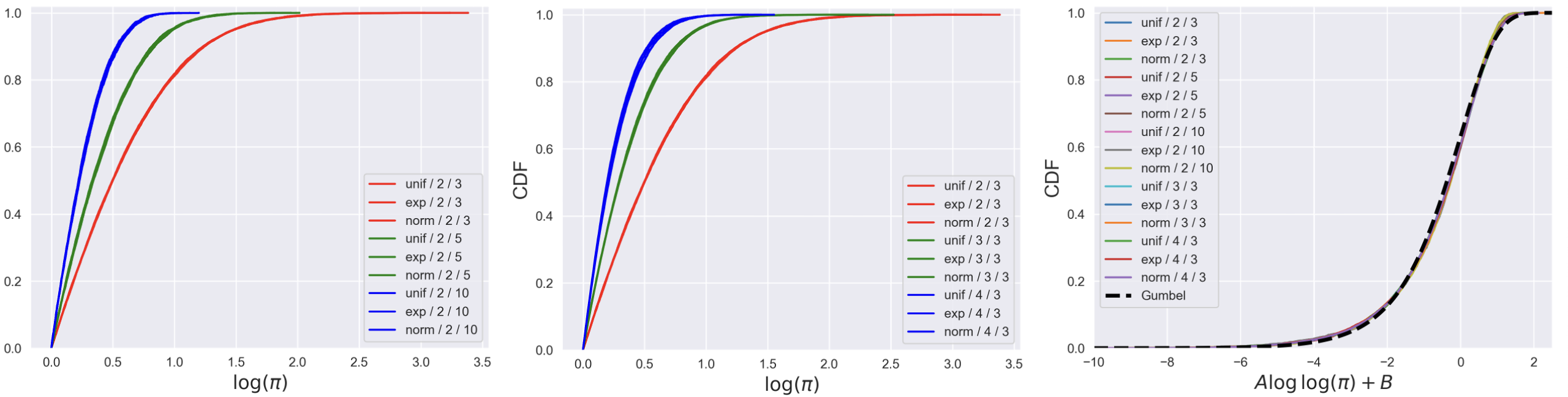}
    \caption{\label{fig:universal}Universal distribution for  $k$-cluster persistence. The labels in the legend are structured as distribution/$d$/$k$, where $d$ is the point-cloud dimension, and $k$ is the cluster size. The distributions taken are uniform in a unit box, exponential, and normal. The first two plots show that weak universality holds, and that the limit depends on $d,k$, but not on the distribution. The rightmost plot, demonstrates that strong universality holds under a proper normalization. We also included the left-skewed Gumbel distribution (dashed line) for comparison. }
    \label{fig:enter-label}
\end{figure}

\subsection{Clustering}

As mentioned in the introduction, a key motivation for this work was to apply the $k$-cluster filtration to clustering. To obtain a clustering from a 0-dimensional persistence diagram, we use the algorithm proposed in \cite{chazal_persistence-based_2013}. Roughly speaking, given a threshold $\alpha$, it extracts all clusters which are more than $\alpha$-persistent. We note that the original measure  for persistence in \cite{chazal_persistence-based_2013} was given by $d-b$, however the change to use $d/{b}$ in the algorithm is trivial. 

\paragraph{Statistical Testing.} An important consequence of the universality results in Section \ref{sec:univ} is that  the limiting distribution (after normalization) appears to be a known distribution, i.e. left-skewed Gumbel. We can thus perform statistical testing on the number of clusters as in \cite{bobrowski_universal_2023}. The null-hypothesis denoted by  $\mathcal{H}_0^{(i)}$, is that the $i$-th most persistent cluster is due to noise. Assuming the universality conjectures hold, the null hypothesis is given in terms of the $\ell$-values as
$$
\mathcal{H}_0^{(i)}\ :\ \ell(p_i) \sim \mathrm{LGumbel}.
$$
where $p_i$ represents the $i$-th most persistent cluster in terms of death/birth. The corresponding p-value is given by
\begin{equation*}
\text{p-value}_i = \cprob{\ell(p_i) \ge x}{\mathcal{H}_0^{(i)}} = e^{-e^x}.
\end{equation*}
Note that since we are testing sorted values, we must use a multiple hypothesis testing correction. In the experiments we describe below, we use the Bonferroni correction.

\begin{figure}
 \centering
 \vspace{-1cm}
  \includegraphics[height=0.95\textheight]{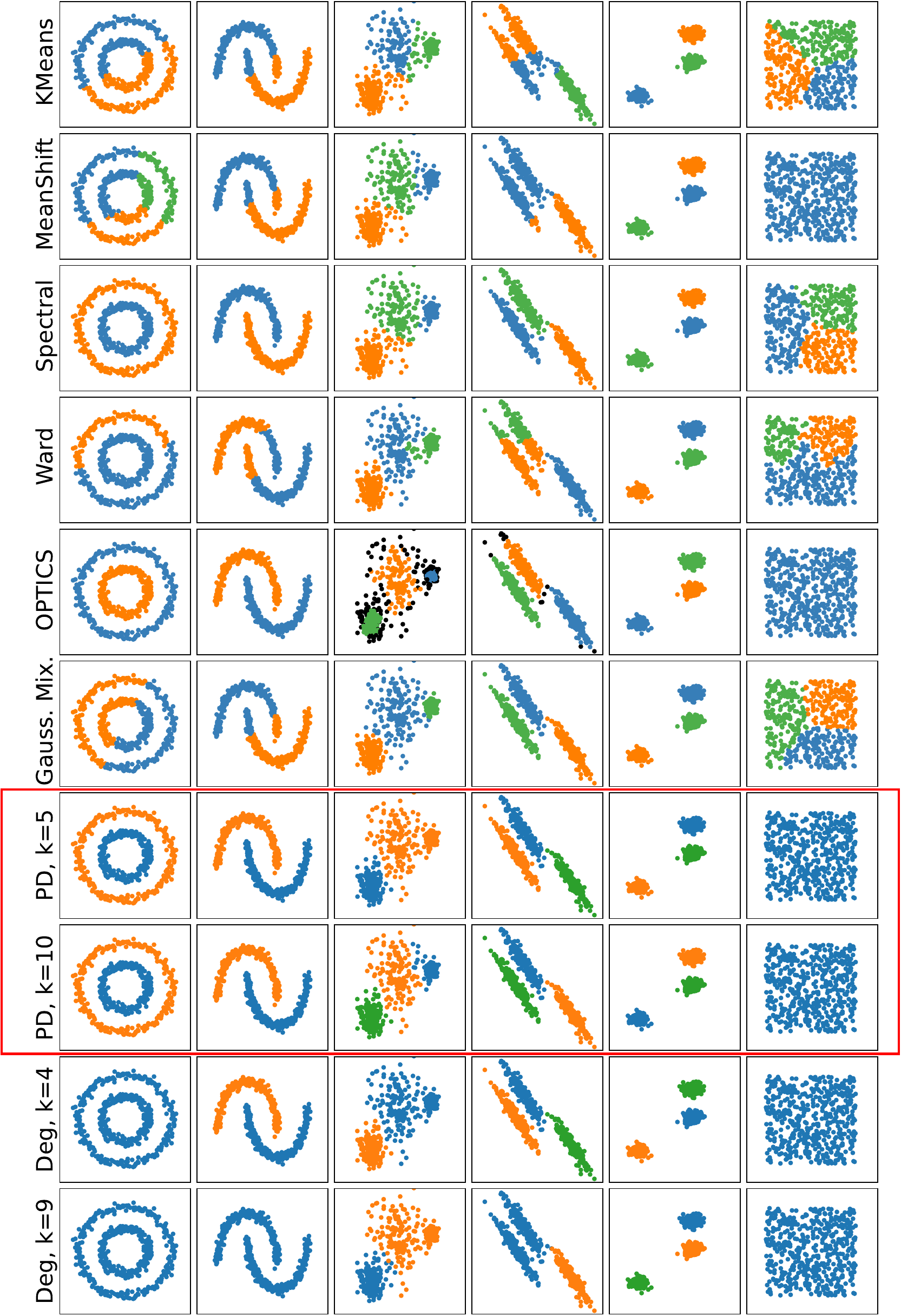}
    \caption{\label{fig:clustering} A comparison of standard clustering examples for different clustering approaches. In the case of the $k$-cluster filtration (PD) and $k$-degree filtration (Deg), the number of clusters was chosen using statistical significance testing.  }
\end{figure}

In Figure \ref{fig:clustering}, we compared the $k$-cluster filtration and the $k$-degree filtration using persistence based clustering from \cite{chazal_persistence-based_2013} with other common algorithms for clustering. For the other approaches, we used the standard implementations found in \cite{pedregosa_scikit-learn_2011}, which have associated techniques for choosing the number of clusters.  
In the cases of the $k$-cluster filtration and the $k$-degree filtration, the number of clusters was chosen using the statistical testing described above. Note that since the number of points in the standard examples was quite small, we limited $k$ to $5$ and $10$. The best result is for the  $k$-cluster filtration with $k=10$ ($k=5$ fails to identify one of the clusters in the third example). The $k$-degree filtration performs well but the additional ``noise" points in the diagram, mean that some clusters are not identified as significant. 

\paragraph{Clustering on Trees.} As a second example, we describe clustering on weighted trees. 
We generated a uniform random tree on $n$ vertices, and assigned uniformly distributed random weights on the edges (between $0$ and $1$). We show an example in Figure \ref{fig:tree}.
The methods seems to capture certain structure about the tree, although we leave further investigation of this structure as future work. 

Note that in the tree case, it is often impossible to use $k$-degree filtrations, as the tree will have vertices with degree smaller than $k$ that will never be included in the filtration, whereas for the $k$-clustering filtration, all nodes are included as long as the underlying graph is connected  (or all components have at least $k$ vertices). We note that it is possible to use an alternative definition for the $k$-degree filtrations, by embedding the tree into a metric space (i.e., using the graph metric induced by the weights). However, this is similar to studying a complete graph induced by the metric which is somewhat different than studying the graph directly. We use this method in the rightmost plot of Figure \ref{fig:tree}.


\begin{figure}
 \centering
  \includegraphics[width=0.99\textwidth]{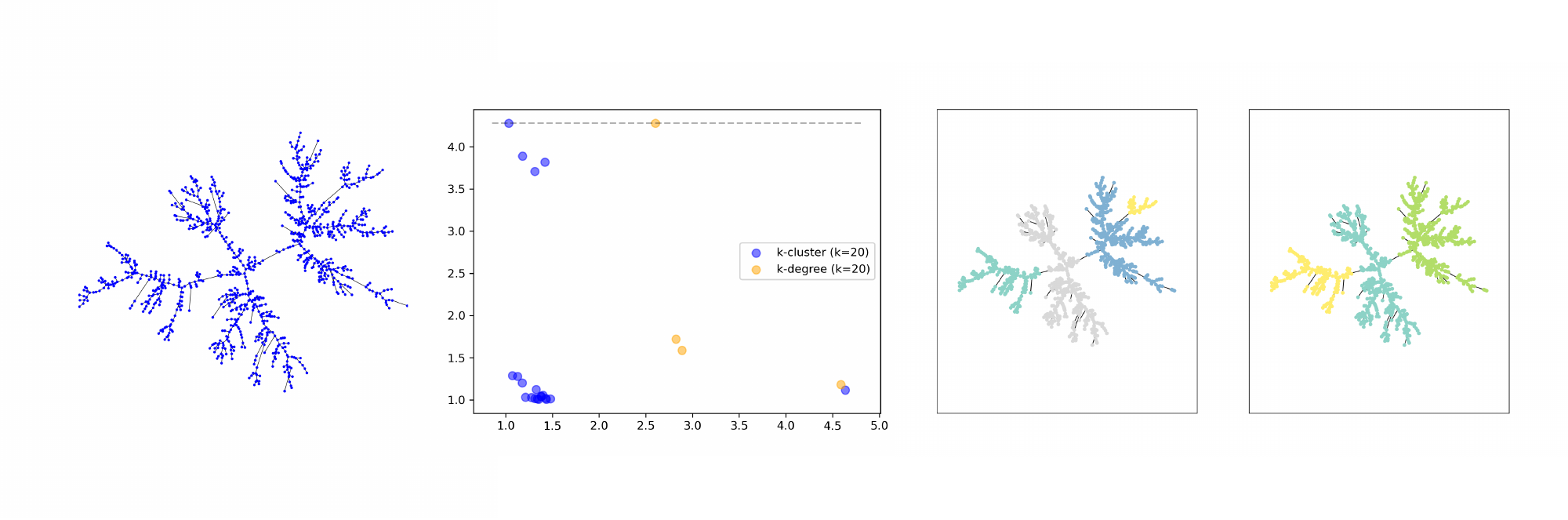}
    \caption{\label{fig:tree} A clustering on a uniform random tree. The threshold with $k$-clustering gives 4 clusters while only 3 with the (metric) $k$-degree. }
\end{figure}

\section{Probabilistic Analysis}
\label{sec:prob}

In this section we wish to revisit some of the fundamental results known for the (persistent) homology of random graphs and simplicial complexes, and show that analogous statements hold for our new $k$-cluster filtration. We provide here the main statements. Proofs are available in the appendix.

\subsection{Connectivity}
We will consider two models here. In the $G(n,p)$ random graph we have $n$ vertices, and each edge is placed independently with probability $p$. In the $G(n,r)$ random geometric graph, we take 
a homogeneous Poisson process $\cP_n$ on the $d$-dimensional flat torus, with rate $n$. Edges are then placed between vertices that are less than distance $r$ apart.
In both models, connectivity results are tied to the expected degree. For the $G(n,p)$ model we define $\Lambda =np$, and for the $G(n,r)$ we take $\Lambda = n\omega_dr^d$. Then in \cite{erdos_random_1959} and \cite{penrose_longest_1997} the following was proved.
\begin{thm}\label{thm:rg_conn}
Let $G_n$ be either $G(n,p)$ or $G(n,r)$. Then
\[
\limninf\prob{G_n\text{ is connected}} = \begin{cases} 1 & \Lambda = \log n + w(n),\\
0 & \Lambda = \log n - w(n).\end{cases}
\]
\end{thm}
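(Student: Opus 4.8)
The plan is to reduce the connectivity question to the far simpler question of whether $G_n$ contains an \emph{isolated vertex}, since near this threshold an isolated vertex is overwhelmingly the most likely obstruction to connectivity. Let $X$ denote the number of isolated vertices. For $G(n,p)$ the probability that a fixed vertex is isolated is $(1-p)^{n-1}$, so
\[
\E[X] = n(1-p)^{n-1} \sim n e^{-\Lambda},
\]
using $p=\Lambda/n$ and $\Lambda\sim\log n$. For $G(n,r)$ the same asymptotic holds by the Mecke/Palm formula: a typical point is isolated precisely when the ball of volume $\od r^d$ around it is empty, which by the void probability of the Poisson process has probability $e^{-n\od r^d}=e^{-\Lambda}$, giving $\E[X]=n e^{-\Lambda}$ as well. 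Substituting $\Lambda = \log n \pm w(n)$ (with $w(n)\to\infty$) yields $\E[X] \sim e^{\mp w(n)}$, so the expected number of isolated vertices vanishes in the connected regime and diverges in the disconnected regime.

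For the subcritical case $\Lambda = \log n - w(n)$, where $\E[X]\to\infty$, first-moment information is not enough, so I would apply the second moment method. A direct computation of $\E[X(X-1)]$ (the expected number of ordered pairs of isolated vertices, where in $G(n,p)$ the $2n-3$ incident edges must all be absent) gives $\E[X(X-1)]/\E[X]^2 \to 1$, and the analogous estimate holds for $G(n,r)$ since far-apart points are asymptotically independent. Hence $\mathrm{Var}(X)=o(\E[X]^2)$, and Chebyshev's inequality gives $\prob{X\ge 1}\to 1$. An isolated vertex certifies disconnection, so $\limninf\prob{G_n \text{ connected}}=0$.

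For the supercritical case $\Lambda=\log n + w(n)$, the absence of isolated vertices is immediate from Markov's inequality, since $\E[X]\to 0$. The substantive work is to upgrade ``no isolated vertices'' to full connectivity, i.e. to rule out connected components of every size $2\le s\le n/2$. For $G(n,p)$ I would union-bound the expected number of such components: a vertex set of size $s$ forming its own component contributes at most $\binom{n}{s}s^{s-2}p^{s-1}(1-p)^{s(n-s)}$ (a spanning tree via Cayley's formula, with all $s(n-s)$ crossing edges absent), and one shows this sum is dominated by the $s=1$ term and vanishes. For $G(n,r)$ the corresponding step is geometric: one tessellates the torus into cells of side comparable to $r$, shows every cell is occupied \whp, and argues that occupied neighbouring cells are directly connected, so the only possible obstruction is again a locally isolated point. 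This component-elimination step is the main obstacle --- it is where the two models genuinely diverge and where the estimates are most delicate --- whereas the isolated-vertex analysis is essentially identical in both settings.
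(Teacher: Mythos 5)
The paper itself gives no proof of this theorem: it is quoted as a classical result from Erd\H{o}s--R\'enyi (1959) for $G(n,p)$ and Penrose (1997) for $G(n,r)$, so your attempt must be measured against those proofs. Your isolated-vertex analysis (first moment in both regimes, second moment with the $2n-3$ absent edges for the subcritical case, and its Poisson analogue via Mecke plus asymptotic independence of far-apart points) is exactly the standard route and is correct. Likewise your supercritical argument for $G(n,p)$ --- the union bound $\binom{n}{s}s^{s-2}p^{s-1}(1-p)^{s(n-s)}$ over component sizes $2\le s\le n/2$ --- is the Erd\H{o}s--R\'enyi/textbook argument, and indeed the same bound is equation (A.1) in this paper's proof of its own Theorem 5.2.

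The genuine gap is in the geometric supercritical case. Your proposed tessellation argument --- cover the torus by cells of side comparable to $r$, show every cell is occupied \whp, and connect neighbouring occupied cells --- cannot reach the sharp threshold $\Lambda=\log n+w(n)$. For points in one cell to be adjacent you need side length about $r/\sqrt{d}$, so a cell has expected occupancy $\Lambda\, d^{-d/2}/\omega_d = c_d\log n\,(1+o(1))$ with $c_d = d^{-d/2}/\omega_d<1$ for every $d\ge 1$ (e.g.\ $c_2=1/(2\pi)$). A fixed cell is then empty with probability $n^{-c_d+o(1)}$, and since there are $\Theta(n/\log n)$ cells, the expected number of empty cells is $n^{1-c_d+o(1)}\to\infty$: at the sharp threshold many cells \emph{are} empty \whp, so the claim ``every cell is occupied'' is simply false there, and the conclusion ``the only obstruction is a locally isolated point'' does not follow. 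Cell arguments of this type only prove connectivity at $\Lambda\ge C_d\log n$ for a dimension-dependent constant $C_d>1$. Penrose's actual proof is substantially more delicate: it shows that at this scale the expected number of components of size $\ge 2$ other than the giant (equivalently, of small ``isolated clusters'' of diameter up to order $r$) vanishes, so that \whp the graph consists of a giant component plus isolated vertices, and connectivity is then decided by the last isolated vertex --- the structural fact the paper alludes to just after the theorem statement. You correctly flagged this step as the delicate one, but the mechanism you sketched for it would fail; you need the small-component estimates, not cell occupancy.
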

A key element in proving connectivity (for either models) is to show that around $\Lambda = \log n$, the random graph consists of a single giant component, a few isolated vertices, and nothing else. Thus, connectivity is achieved when the last isolated vertices gets connected.

Our goal in this section is to analyze connectivity in the $G(n,p)$ and $G(n,r)$ model, via our new $k$-cluster filtration. 
Note that for a fixed $n$, we can view both models as filtrations over the complete graph. For the $G(n,p)$ model the weights of the edges, are independent random variables, uniformly distributed in $[0,1]$. For the $G(n,r)$ the weight of an edge is given by the distance between the corresponding points in the torus. We define $G^{(k)}(n,p)$ and $G^{(k)}(n,r)$ to be the random filtrations generated by changing the filtration function to be $\tau_k$. 
Our goal here is to explore the phase transition for the $k$-cluster connectivity. As opposed to connectivity in the original random graphs, the results here differ between the models.
\begin{thm}\label{thm:pt_er}
For the $G^{(k)}(n,p)$ filtered graph we have,
    \[
        \limninf\prob{G^{(k)}(n,p)\text{ is connected}} = \begin{cases}1 & \Lambda = \frac1k(\log n + (k-1)\logg n) +w(n),\\
        0 & \Lambda = \frac1k(\log n + (k-1)\logg n) - w(n),\end{cases}
    \]
    for any $w(n) = o(\logg n)$ such that $w(n)\to\infty$.
\end{thm}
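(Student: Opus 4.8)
The plan is to reduce the connectivity of the filtered graph $G^{(k)}(n,p)$ to a statement about the component structure of the ordinary \erdren graph $G(n,p)$, and then run a first/second moment analysis. The starting observation is that, at threshold $t=p$, a vertex $v$ is present in $\cG^{(k)}_p$ exactly when $\tau_k(v)\le p$, which by \eqref{eqn:def_tau_v} happens iff the component of $v$ in the ordinary graph $\cG^*_p = G(n,p)$ has size at least $k$; moreover, the surviving edges are precisely the $G(n,p)$-edges joining two surviving vertices. Consequently, $G^{(k)}(n,p)$ is connected if and only if $G(n,p)$ possesses a \emph{unique} connected component of size $\ge k$. Since $\Lambda = np \to \infty$ throughout the relevant window, standard $G(n,p)$ theory guarantees a single giant component together with small ($O(\log n)$-sized) components, so the event in question is exactly that every non-giant component has size strictly less than $k$.

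The proof therefore reduces to locating the threshold at which the last non-giant component of size $\ge k$ disappears. Let $X_{\ge k}$ denote the number of non-giant components of size at least $k$. The key quantity is the expected number of tree components of size exactly $j$, which for $p=\Lambda/n$ behaves like
\[
\mu_j \;\approx\; \frac{n}{j!}\,j^{\,j-2}\,\Lambda^{\,j-1}\,e^{-j\Lambda}.
\]
Because $\Lambda\to\infty$, the ratio $\mu_{j+1}/\mu_j \approx e^{1-\Lambda}\Lambda \to 0$, so the sequence is sharply decreasing and $X_{\ge k}$ is dominated by tree components of the minimal size $j=k$; denser (unicyclic and beyond) and larger tree components are lower order and absorbed by routine tail bounds, with the giant separated by the usual size gap. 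Taking logarithms and using $\Lambda \sim \tfrac1k\log n$, so that $\log\Lambda = \logg n + O(1)$, gives
\[
\log\mu_k \;=\; \log n + (k-1)\logg n - k\Lambda + O(1),
\]
which pins the critical value at $k\Lambda = \log n + (k-1)\logg n$, matching the statement.

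For the connectivity direction I would use the first moment: when $\Lambda = \tfrac1k(\log n + (k-1)\logg n) + w(n)$ with $w(n)\to\infty$, the asymptotics above give $\E[X_{\ge k}] = (1+o(1))\mu_k \to 0$, so Markov's inequality forces $X_{\ge k}=0$ \whp{} and the filtered graph is connected. For the disconnectivity direction I would apply a second moment / Poisson approximation to $X_{=k}$, the number of tree components of size exactly $k$: here $\E[X_{=k}] \sim \mu_k \to \infty$, and since distinct size-$k$ tree components are vertex-disjoint and only weakly dependent, a Chen--Stein (or direct second-moment) argument shows $\Var(X_{=k}) = o(\E[X_{=k}]^2)$, whence $X_{=k}\ge 1$ \whp; a non-giant component of size $k$ alongside the giant makes $G^{(k)}(n,p)$ disconnected.

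The main obstacle is the moment analysis rather than the reduction. On the first-moment side the delicate point is to control uniformly the contribution of \emph{all} non-giant components of size $\ge k$ — not merely the size-$k$ trees — across the entire range up to the giant, so that the bound $\E[X_{\ge k}] = (1+o(1))\mu_k$ is genuinely justified; the hypothesis $w(n)=o(\logg n)$ is precisely what keeps $\log\Lambda = \logg n + O(1)$ and the error terms negligible, so the $(k-1)\logg n$ correction is identified correctly. On the second-moment side the care lies in showing that the pair-correlation between two disjoint candidate components contributes only a $(1+o(1))$ factor, so the variance is truly of lower order; this is the standard but technical heart of establishing the sharp disconnectivity threshold.
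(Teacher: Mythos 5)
Your proposal is correct and follows essentially the same route as the paper: reduce connectivity of $G^{(k)}(n,p)$ to $G(n,p)$ having no component of size in $[k,n/2]$ besides the giant, kill all sizes $j>k$ by a first-moment bound over spanning trees (the paper splits $k+1\le j<4k$ and $4k\le j\le n/2$ where you invoke a decreasing-ratio/tail argument), and locate the threshold via $\E[N_k]\approx e^{-kc}$ with Markov for the connected regime and a second-moment (Chen--Stein) argument for the disconnected one. Your computation $\log\mu_k=\log n+(k-1)\logg n-k\Lambda+O(1)$, with $w(n)=o(\logg n)$ ensuring $\log\Lambda=\logg n+O(1)$, matches the paper's identification of the critical window exactly.
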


For the $G^{(k)}(n,r)$ model, proving the connectivity is a much more challenging task and beyond the scope of this paper. 
The following statement, however, is relatively straightforward to prove.
\begin{prop}\label{prop:N_k}
Let $N_k = N_k(n,r)$ be the number of connected components of size $k$ in $G(n,r)$. Then,
     \[
        \limninf\prob{N_k =0} = \begin{cases}1 & \Lambda = \log n-(d-1)(k-1)\logg n+w(n),\\
        0 & \Lambda = \log n - (d-1)(k-1)\logg n - w(n).\end{cases}
    \]
    for any $w(n) = o(\logg n)$ such that $w(n)\to\infty$.
\end{prop}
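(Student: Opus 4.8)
The plan is to determine the sharp threshold by computing the first moment $\E[N_k]$ through a Laplace-type asymptotic analysis, then applying Markov's inequality in the supercritical branch and a second-moment argument in the subcritical branch. A component of size exactly $k$ is a $k$-tuple of points of $\cP_n$ spanning a connected subgraph of $G(n,r)$ whose combined $r$-neighbourhood contains no further points. By the multivariate Mecke formula,
\[
\E[N_k] = \frac{n^k}{k!}\int_{(\T^d)^k} g_r(x_1,\dots,x_k)\,\exp\!\Bigl(-n\,\Vol\bigl(\textstyle\bigcup_{i=1}^{k} B(x_i,r)\bigr)\Bigr)\,dx,
\]
where $g_r$ is the indicator that $x_1,\dots,x_k$ span a connected graph at scale $r$ and the exponential is the void probability of the union of balls. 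Substituting $x_i = x_1 + r y_i$ (with $y_1=0$), integrating out $x_1$ over the unit-volume torus, using $nr^d=\odi\Lambda$, and writing $v(y) := \odi\,\Vol\bigl(\bigcup_i B(y_i,1)\bigr)\ge 1$, I obtain
\[
\E[N_k] = \frac{n\,(\odi\Lambda)^{k-1}}{k!}\,e^{-\Lambda}\int_{(\R^d)^{k-1}} g_1(0,y)\,e^{-\Lambda(v(y)-1)}\,dy .
\]

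The integral concentrates near the fully clustered configuration $y=0$, the unique minimiser of $v$. The key geometric input is that, for small $y$, the excess volume is positively homogeneous of degree one,
\[
v(y)-1 \;\sim\; \odi\int_{S^{d-1}}\max_{1\le i\le k}(y_i\cdot\nu)\,d\nu \;=:\; \odi\,\phi(y),
\]
obtained from the support-function description of the outer boundary of a union of nearly coincident unit balls; here $\phi$ is nonnegative, degree-one, and norm-like, vanishing only at $y=0$. Rescaling $y = u/\Lambda$ (so that $dy = \Lambda^{-d(k-1)}\,du$, $g_1(0,u/\Lambda)\to 1$, and $\Lambda(v-1)\to\odi\phi(u)$) yields
\[
\int_{(\R^d)^{k-1}} g_1(0,y)\,e^{-\Lambda(v(y)-1)}\,dy \;\sim\; c_{d,k}\,\Lambda^{-d(k-1)},\qquad c_{d,k}:=\int_{(\R^d)^{k-1}} e^{-\odi\phi(u)}\,du<\infty ,
\]
and combining the two displays gives $\E[N_k] \sim C_{d,k}\, n\,\Lambda^{-(d-1)(k-1)}\,e^{-\Lambda}$, since $k-1-d(k-1)=-(d-1)(k-1)$.

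Substituting $\Lambda = \log n - (d-1)(k-1)\logg n \pm w(n)$ gives $e^{-\Lambda} = n^{-1}(\log n)^{(d-1)(k-1)}e^{\mp w(n)}$, and because $w(n)=o(\logg n)$ one may replace $\Lambda^{-(d-1)(k-1)}$ by $(\log n)^{-(d-1)(k-1)}(1+o(1))$, so that $\E[N_k]\sim C_{d,k}\,e^{\mp w(n)}$. In the upper branch $w(n)\to\infty$ forces $\E[N_k]\to 0$ and hence $\prob{N_k=0}\to 1$ by Markov's inequality. In the lower branch $\E[N_k]\to\infty$, and I would conclude $\prob{N_k=0}\le \Var(N_k)/\E[N_k]^2\to 0$ via a second-moment estimate: expanding $\E[N_k(N_k-1)]$ with Mecke over $2k$ points, the distant-pair contribution matches $\E[N_k]^2$ while the overlapping-pair contribution (forbidden regions intersecting) is smaller by a vanishing factor, so that $N_k$ is asymptotically Poisson and $\prob{N_k=0}\to 0$.

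The crux is the Laplace step, because the minimiser $y=0$ is a degenerate, non-smooth (conical) minimum: $v-1$ is only degree-one homogeneous rather than quadratic, so the textbook quadratic Laplace method does not apply and one must instead use the homogeneity directly. Making this rigorous requires (i) justifying the support-function expansion of $v-1$ with an error $O(|y|^2)$, which is negligible at the dominant scale $|y|\sim 1/\Lambda$, (ii) bounding the contribution of large-spread configurations, where either $g_1=0$ or $v-1$ is bounded below, by an exponentially smaller quantity, and (iii) verifying that $\phi$ grows linearly so that $c_{d,k}<\infty$. The second-moment bound is routine but technical, and both parts can be carried out within the framework of Penrose's random geometric graphs.
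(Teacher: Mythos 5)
Your proposal is correct, and at the probabilistic level it has the same skeleton as the paper's proof: first moment plus Markov for the branch $\Lambda = \log n-(d-1)(k-1)\logg n+w(n)$, and Chebyshev via $\prob{N_k=0}\le \Var(N_k)/\E[N_k]^2$ for the other branch. The difference is where the quantitative input comes from. The paper's proof is essentially a citation: it quotes Theorem 3.3 of the cited Penrose (2022) paper for the asymptotics $\E[N_k]\approx\Var(N_k)\approx C_k\, n\,\Lambda^{-(d-1)(k-1)}e^{-\Lambda}$, and then the two branches follow exactly as in your last-but-one paragraph. You instead re-derive the first-moment asymptotics from scratch via the Mecke formula and a Laplace analysis at the conical (degree-one homogeneous) minimum of the excess void volume, and your bookkeeping is right: the radial/support-function expansion gives $v(y)-1\sim\odi\int_{S^{d-1}}\max_i(y_i\cdot\nu)\,d\nu + O(|y|^2)$, the dominant configurations have spread of order $r/\Lambda$, the connectivity indicator trivializes at that scale, and $\Lambda^{k-1}\cdot\Lambda^{-d(k-1)}=\Lambda^{-(d-1)(k-1)}$ reproduces precisely the polylogarithmic correction the paper imports, with $w(n)=o(\logg n)$ entering exactly where you say. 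This is in fact the mechanism underlying the cited theorem, so your derivation explains the exponent $(d-1)(k-1)$ rather than treating it as a black box, which the paper's two-line proof does not. What each approach buys: the paper's route is short and rigorous modulo the reference; yours is self-contained but leaves as a sketch the one input still needed at full strength for Chebyshev, namely $\Var(N_k)\approx\E[N_k]$ (i.e.\ $\Var(N_k)/\E[N_k]^2\to 0$). Your sketch identifies the right mechanism — for two distinct $k$-clusters at mutual distance just above $r$ the union of their void regions has volume at least $(1+\delta_d)\,\omega_d r^d$ for some $\delta_d>0$, so close pairs are suppressed by $e^{-\delta_d\Lambda}=n^{-\delta_d+o(1)}$ relative to $\E[N_k]^2$, while far pairs factorize up to an excluded-volume factor $1-o(1)$ — but this step should either be carried out or cited (as the paper does); with that caveat, the argument is complete and matches the stated threshold.
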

From this lemma we conclude that when $\Lambda = \log n -(d-1)(k-1)\logg n -w(n)$  the graph $G(n,r)$ has components of size $k$, which implies that $G^{(k)}(n,r)$ is not connected.
On the other hand, when $\Lambda = \log n -(d-1)(k-1)\logg n +w(n)$, we have $N_j=0$ for all fixed $j\ge k$. Which indicates that $G^{(k)}(n,r)$ should be connected. This leads to the following conjecture.
\begin{con}
For the $G^{(k)}(n,r)$ filtered graph we have,
    \[
        \limninf\prob{G^{(k)}(n,r)\text{ is connected}} = \begin{cases}1 & \Lambda = \log n-(d-1)(k-1)\logg n+w(n),\\
        0 & \Lambda = \log n - (d-1)(k-1)\logg n - w(n).\end{cases}
    \]
\end{con}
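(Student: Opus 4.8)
The final statement is a sharp threshold for $k$-cluster connectivity of the random geometric graph, and the plan is to drive it by the first-moment asymptotics for small components that also power Proposition~\ref{prop:N_k}. First I would fix the meaning of ``$G^{(k)}(n,r)$ is connected'': at radius $r$ the born vertices are exactly those lying in a component of $G(n,r)$ of size at least $k$, so $G^{(k)}(n,r)$ is connected precisely when $G(n,r)$ has a \emph{unique} component of size $\ge k$. The subcritical (``$=0$'') direction is then immediate from Proposition~\ref{prop:N_k}: when $\Lambda = \log n -(d-1)(k-1)\logg n - w(n)$ we have $N_k\ge 1$ with high probability, and since at this density a giant component of size $\gg k$ coexists, there are at least two components of size $\ge k$ and $G^{(k)}(n,r)$ is disconnected. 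All the work is in the supercritical (``$=1$'') direction.

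The shared engine is the first moment of the number $N_j$ of components of size exactly $j$. By the multivariate Mecke formula for $\cP_n$,
\[
\E[N_j] = \frac{n^{j}}{j!}\int_{(\T^d)^{j}} \ind\{\vec{x}\text{ connected at radius }r\}\, e^{-n\Vol\left(\bigcup_{i} B(x_i,r)\right)}\, d\vec{x},
\]
where the exponential is the probability that no further point of $\cP_n$ lies within $r$ of the cluster. Pulling out the translation $x_1$ (a torus factor of $1$) and rescaling $x_i = x_1 + r y_i$ turns this into $\tfrac{n}{j!}(nr^d)^{j-1}\int \ind\{\text{conn}\}\, e^{-(nr^d)V(\vec{y})}\, d\vec{y}$ with $V(\vec{y})=\Vol(\bigcup_i B(y_i,1))\ge \omega_d$, equality only at the fully collapsed configuration. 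Because unions of balls make $V-\omega_d$ grow \emph{linearly} in the separations, Laplace's method contributes a factor $\asymp (nr^d)^{-d}$ per free point, so
\[
\E[N_j] \asymp C_{d,j}\, n\, \Lambda^{-(d-1)(j-1)}\, e^{-\Lambda},
\]
the exponent $(j-1)(1-d)$ being the net of the $(j-1)$ Palm powers and the $d(j-1)$ Laplace powers. Taking $j=k$ and $\Lambda = \log n-(d-1)(k-1)\logg n \pm w(n)$ makes $\E[N_k]\to 0$ (resp.\ $\to\infty$); in the divergent case a second-moment bound $\prob{N_k=0}\le \Var(N_k)/\E[N_k]^2$, together with asymptotic independence of clusters isolated in disjoint regions (so $\Var(N_k)=o(\E[N_k]^2)$), or equivalently a Chen--Stein Poisson approximation, yields $\prob{N_k=0}\to 0$. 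This is exactly Proposition~\ref{prop:N_k}.

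For the connected direction of the conjecture I would show that above the threshold the giant is the \emph{only} component of size $\ge k$. The plan is to split bad components by size: for $k\le j\le m(n)$ a union bound gives $\prob{\exists\text{ a component of size in }[k,m(n)]}\le \sum_{j=k}^{m(n)}\E[N_j]$, and since $\Lambda^{-(d-1)}\to 0$ the sum is dominated by its $j=k$ term $\E[N_k]\to 0$; for the complementary range $m(n)\le j\le n/2$ one would invoke the standard supercritical geometry of $G(n,r)$ (we sit deep in the regime $\Lambda\asymp\log n$) to rule out two components of macroscopic size, leaving a unique giant. Combining the ranges gives a unique component of size $\ge k$, hence connectivity.

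The hard part, and the reason this is stated only as a conjecture, is precisely the intermediate range. The estimate $\E[N_j]\asymp C_{d,j}\, n\,\Lambda^{-(d-1)(j-1)}e^{-\Lambda}$ was obtained by Laplace's method for \emph{fixed} $j$; making it uniform as $j\to\infty$ -- controlling the geometric constant $C_{d,j}$, the $j$-point connectivity indicator, and the multi-point isolation volume -- is delicate, and bridging from the ``medium'' range to the giant-component regime $m(n)\le j\le n/2$ needs Penrose-type machinery well beyond the fixed-$k$ estimate behind Proposition~\ref{prop:N_k}. I expect this uniform control of growing-size components to be the crux.
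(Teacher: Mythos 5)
The statement you were asked to prove is a \emph{conjecture}: the paper offers no proof of it. Its only support in the paper is Proposition \ref{prop:N_k} together with an informal remark (below the threshold, components of size $k$ exist, forcing disconnection; above it, $N_j=0$ for every \emph{fixed} $j\ge k$, which ``indicates'' connectivity), and the authors explicitly declare the full connectivity proof ``beyond the scope of this paper.'' Your proposal follows essentially this same route, and everything you actually argue is consistent with what the paper proves. Your reduction --- $G^{(k)}(n,r)$ is connected iff $G(n,r)$ has a unique component of size $\ge k$ --- is the right reading of the statement (and is consistent with the $k=1$ case, Theorem \ref{thm:rg_conn}). Your subcritical direction is complete and in fact slightly more careful than the paper's one-line remark: the paper says only that components of size $k$ exist, while you correctly note that one also needs a coexisting second live component, which holds w.h.p.\ since $\Lambda\asymp\log n$ is deep in the supercritical regime and the giant has size $\Theta(n)\gg k$. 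Your Mecke-plus-Laplace computation of
\[
\E[N_j]\asymp C_{d,j}\,n\,\Lambda^{-(d-1)(j-1)}e^{-\Lambda}
\]
is a correct rederivation, with the right exponent bookkeeping ($\Lambda^{j-1}$ from the Palm rescaling against $\Lambda^{-d(j-1)}$ from the linear growth of the excess union-of-balls volume), of the fixed-$k$ estimate that the paper simply imports from \cite{penrose_k_2022}; the second-moment step matches the paper's Chebyshev argument in the proof of Proposition \ref{prop:N_k}.

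For the supercritical (``$=1$'') direction you do not close the argument, and you say so explicitly: the union bound $\sum_{j=k}^{m(n)}\E[N_j]$ needs the first-moment estimate \emph{uniformly} in growing $j$ (the constants $C_{d,j}$ involve $j$-point connectivity integrals and can grow rapidly with $j$, so domination by the $j=k$ term is not automatic), and the bridge from $m(n)$ up to $n/2$ requires uniqueness-of-the-giant machinery beyond the fixed-$j$ computation. Compare the \erdren case (Theorem \ref{thm:pt_er}), where the combinatorial bound \eqref{eqn:N_j_bound} is available for \emph{all} $j\le n/2$ simultaneously; no equally clean uniform bound exists in the geometric setting, and this is precisely why the paper leaves the statement as a conjecture. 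In short: your diagnosis of the obstruction matches the paper's, there are no false steps, but --- necessarily --- neither you nor the paper proves the ``$=1$'' direction.
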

Note that both phase transitions occur before the ones for the original graph models. This is due to the fact that for $k>1$ the $k$-cluster filtration does not allow having any isolated vertices. Also note that taking $k=1$ both results coincide with Theorem \ref{thm:rg_conn}. 

\subsection{Limiting Persistence Diagrams}

In \cite{hiraoka_limit_2018}, it was shown that for stationary point processes, persistence diagrams have a non-random limit (in the vague convergences of measures). A similar statement will hold for the $k$-cluster persistence diagrams.

Let $\Dgm^{(k)}(\cP)$ be the $k$-cluster persistence diagram for a point-cloud $\cP$. We define the discrete measure on $\R^2$,
\[
\xi^{(k)}(\cP) := \sum_{(b,d)\in\Dgm^{(k)}(\cP)} \delta_{(b,d)}.
\]
Let $Q_L = [-L/2,L/2]^d$. The following is an analogue of Theorem 1.5 in \cite{hiraoka_limit_2018}.
\begin{thm}\label{thm:limit_pd}
Assume that $\cP$ is a stationary point process in $\R^d$ with all finite moments. For any $k$, there exists a deterministic measure $\mu_k$, such that
\[
\lim_{L\to\infty}\frac{1}{L^d}\mean{\xi^{(k)}(\cP\cap Q_L)}  = \mu_k,
\]
where the limit is in the sense of vague convergence. Furthermore, if $\cP$ is ergodic, then almost surely
\[
\lim_{L\to\infty}\frac{1}{L^d}{\xi^{(k)}(\cP\cap Q_L)}  = \mu_k.
\]
\end{thm}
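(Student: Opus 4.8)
The plan is to adapt the strategy of \cite{hiraoka_limit_2018} to the non-local filtration $\tau_k$. There are two ingredients: (i) reducing the vague convergence of the measures $\xi^{(k)}$ to the convergence of the associated $0$-dimensional persistent Betti numbers evaluated on rectangles bounded away from the diagonal, and (ii) a spatial almost-additivity / ergodic argument for these quantities, with the finite-moment hypothesis used to control boundary effects.

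First I would reduce to persistent Betti numbers. For $0\le s\le t$, let $\beta^{(k)}_{s,t}(\cP\cap Q_L)$ denote the $0$-dimensional persistent Betti number of the $k$-cluster filtration, i.e.\ the number of components born by time $s$ that are still alive at time $t$. For a rectangle $R=(b_1,b_2]\times(d_1,d_2]$ with $b_2<d_1$, a standard inclusion--exclusion gives
\[
\xi^{(k)}(\cP\cap Q_L)(R) = \beta^{(k)}_{b_2,d_1} - \beta^{(k)}_{b_1,d_1} - \beta^{(k)}_{b_2,d_2} + \beta^{(k)}_{b_1,d_2}.
\]
Since the $k$-cluster diagram lies strictly above the diagonal (births and deaths differ), vague convergence on $\R^2$ follows once $\frac{1}{L^d}\mean{\beta^{(k)}_{s,t}(\cP\cap Q_L)}$ is shown to converge for every fixed $s\le t$, together with the almost-sure analogue when $\cP$ is ergodic.

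Next I would establish an almost-additivity (localization) property that tames the global dependence of $\tau_k$. Partition $Q_L$ into congruent subcubes of fixed side length. The obstruction to additivity of $\beta^{(k)}_{s,t}$ is that a point's filtration value $\tau_k$ computed inside a subcube can differ from its value computed in $Q_L$, because membership in a component of size $\ge k$ is a global condition. The key estimate is that this discrepancy is effectively local: for a fixed death threshold $t$, only components that reach size $k$ at scale $\le t$ are relevant, and in a stationary process with all finite moments the ``$k$-clustering radius'' of a point---the scale at which it first joins a component of size $k$---has tails decaying fast enough that the expected number of points whose $\tau_k$ value is affected by the configuration outside a bounded ball is $O(L^{d-1})$. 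Hence $\beta^{(k)}_{s,t}(Q_L)$ equals the sum of its subcube contributions up to a boundary error that is negligible after dividing by $L^d$.

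Given this, the expectation limit follows from stationarity (the expected per-subcube contribution is translation invariant), and the almost-sure statement in the ergodic case follows from the multiparameter pointwise ergodic theorem applied to the stationary field of subcube scores, exactly as in \cite{hiraoka_limit_2018}. I expect the main obstacle to be precisely the localization estimate of the third step: in the standard filtration vertex birth times are identically $0$ and the geometric graph has a deterministic interaction range at each scale, whereas here $\tau_k$ is defined through the connectivity structure of the entire configuration. The crux is therefore to quantify the tails of the $k$-clustering radius and show, using the finite-moment assumption, that long-range dependence and boundary corrections each contribute only $o(L^d)$; once this is done, the remaining steps are routine applications of the framework of \cite{hiraoka_limit_2018}.
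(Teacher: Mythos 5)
Your high-level skeleton (reduce vague convergence to persistent Betti numbers on rectangles, then run the nearly-additive/ergodic machinery of \cite{hiraoka_limit_2018}) is the same one the paper uses, but your pivotal third step rests on a false premise. You claim that the finite-moment hypothesis forces the ``$k$-clustering radius'' to have fast-decaying tails. Moment assumptions on a stationary point process control the \emph{upper} tails of counts $N(A)$, not void probabilities: a Cox (mixed Poisson) process whose random intensity has all moments finite but places mass near $0$ has $\prob{N(B_r)=0}=\mean{e^{-\lambda \vol(B_r)}}$ decaying only polynomially, so nearest-neighbor and $k$-clustering radii can be arbitrarily heavy-tailed while every moment is finite. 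As stated, your key localization estimate fails. The estimate is also unnecessary, because the relevant locality is \emph{deterministic}: for fixed $t$, the event $\set{N_t(v)\ge k}$ depends only on the configuration in the ball $B(v,(k-1)t)$ --- if the component of $v$ at scale $t$ has at least $k$ points, a breadth-first search from $v$ finds $k$ of them within graph distance $k-1$, hence Euclidean distance $(k-1)t$. With this observation, the truncated filtration values $\tau_k(\cdot)\wedge t$ computed in a subcube and in $Q_L$ agree except within distance $(k-1)t$ of the interfaces, and the finite-moment hypothesis is used only where it actually helps: bounding the expected number of vertices and edges in these $O(L^{d-1})$-volume interface regions.

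The second gap is that you dismiss as ``routine'' precisely the step the paper identifies as the new content. To convert ``few cells differ near interfaces'' into ``persistent Betti numbers differ by $o(L^d)$,'' one needs a bounded-difference statement for $\beta_0^{r,s}$ under the $k$-cluster filtration, and this is \emph{not} a direct application of Lemma 2.10 of \cite{hiraoka_limit_2018}: adding a single edge $e_0=(u,v)$ can retroactively lower $\tau_k$ on entire small clusters (the components of $u$ and $v$ at the time they first reach size $k$), so one insertion cascades into many changed filtration values. The paper's entire proof consists of the lemma $\size{\beta_0^{r,s}(\tilde G^{(k)})-\beta_0^{r,s}(G^{(k)})}\le 1$ for the addition of one edge, proved by a three-case analysis on the sizes of the two endpoint components ($|C_u|,|C_v|<k$; one $\ge k$; both $\ge k$), after which the framework of \cite{hiraoka_limit_2018} applies verbatim. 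Your counting-of-affected-vertices route could be repaired into an alternative (bounding the Betti-number discrepancy by the number of cells whose value changes, via standard rank arguments), but as written the proposal neither states nor proves any such lemma, and its substitute --- the tail estimate --- does not follow from the hypotheses.
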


\subsection{Maximal Cycles}

In \cite{bobrowski_maximally_2017} the largest cycles in persistence diagrams were studied. Specifically, for every point $p= (b,d)$ in a diagram, we compute the so-called $\pi$-value - $\pi(p) = d/b$, as a scale-invariant measure of size. Considering the homogeneous Poisson process $\cP_n$, we define $\Pi_{k,\max}$ as the largest $\pi$-value in the $k$-th persistent homology. The main result in \cite{bobrowski_maximally_2017} then states that with high probability
\[
    A_k \Delta_k(n) \le \Pi_{k,\max} \le B_k\Delta_k(n),
\]
where $A_k,B_k>0$ are constants, and 
\[
    \Delta_k(n) = \param{\frac{\log n}{\logg n}}^{1/k}.
\]
For the $k$-cluster persistence, we will show that the largest $\pi$-value has a completely different scaling.
\begin{thm}
    Let $\cP_n$ be a homogeneous Poisson process in the flat torus, with rate $n$. Let $\Pi^{(k)}_{\max}$ denote the maximum $\pi$-value in the $k$-cluster persistence diagram (excluding the infinite cluster).
    Then, for every $\eps>0$ we have
    \[
        \limninf\prob{n^{\frac1{d(k-1)}-\eps} \le \Pi^{(k)}_{\max} \le n^{\frac1{d(k-1)}+\eps}} = 1.
    \]
\end{thm}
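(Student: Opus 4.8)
The plan is to bound the extremal ratio by controlling the smallest birth time and the largest finite death time \emph{separately} across the whole diagram, since for any single persistence point $(b,d)$ one trivially has $\pi = d/b \le d_{\max}/b_{\min}$, where $b_{\min}$ and $d_{\max}$ denote the global minimum birth and maximum finite death. The heuristic behind the exponent is that the earliest a cluster of size $k$ can form is governed by the tightest $k$-tuple of points: the expected number of $k$-point subsets that are connected at radius $\rho$ is of order $n^k\rho^{d(k-1)}$, so the minimal birth radius satisfies $b_{\min}\approx n^{-k/(d(k-1))}$. On the other hand, every finite death is the merge of two already-active components, which must occur before the whole graph connects, so $d_{\max}$ is of the order of the connectivity radius $(\log n/n)^{1/d}$ coming from Theorem \ref{thm:rg_conn}. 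Combining, $\pi_{\max}\approx d_{\max}/b_{\min}\approx n^{1/(d(k-1))}$ up to polylogarithmic factors, which the $n^{\pm\eps}$ window absorbs.

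For the upper bound I would make each estimate precise by a first-moment argument. Fixing $\rho_- = n^{-k/(d(k-1))-\eps_1}$, a Mecke/Palm computation bounds the expected number of $k$-subsets of $\cP_n$ that are connected at radius $\rho_-$ by $C_k n^k \rho_-^{d(k-1)} = C_k n^{-\eps_1 d(k-1)} \to 0$ (using the spanning-tree bound: a connected $k$-set costs $k-1$ edges, each contributing a $\rho^d$ volume factor); hence \whp\ no cluster is born before $\rho_-$, i.e. $b_{\min}\ge \rho_-$. For the death times I would invoke the connectivity threshold: since the recorded death value of each active--active merge is the underlying edge weight $W(e)$ (Algorithm \ref{alg:main}, case~3) and every such edge lies in the MST, each finite death is at most the largest MST edge, which equals the connectivity radius of $G(n,r)$; by Theorem \ref{thm:rg_conn} this is at most $n^{-1/d+\eps_2}$ \whp. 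Together these give $\Pi^{(k)}_{\max}\le d_{\max}/b_{\min}\le n^{1/(d(k-1))+\eps_1+\eps_2}$, and choosing $\eps_1+\eps_2=\eps$ finishes the upper bound.

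For the lower bound I would exhibit \whp\ a single point whose ratio is almost this large, using an isolated tight cluster. Set $\rho_+ = n^{-k/(d(k-1))+\eps'}$ and $R = n^{-1/d-\eps''}$, and call a configuration \emph{good} if some $k$ points of $\cP_n$ lie in a ball of diameter $\le\rho_+$ while the concentric ball of radius $R$ contains no further points. A good configuration produces a component that first reaches size $k$ at radius $\le\rho_+$ (birth $\le\rho_+$) and stays intact until radius $R$; being finite — not the unique infinite cluster — it is eventually killed in a merge, and because it is isolated out to $R$ this merge occurs at radius $\ge R$. By the elder rule the cluster is the younger partner precisely at its fatal merge, so its own small birth is the recorded birth, and the diagram contains a point with $b\le\rho_+$, $d\ge R$, giving $\pi\ge R/\rho_+ = n^{1/(d(k-1))-\eps'-\eps''}$. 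Existence of a good configuration follows from a first- and second-moment computation: the expected number of good configurations is of order $n^k\rho_+^{d(k-1)}e^{-\od nR^d}\sim n^{\eps' d(k-1)}\to\infty$ (the isolation factor tends to $1$ since $nR^d = n^{-\eps'' d}\to 0$), and since such configurations live in disjoint balls they are essentially independent, so Chebyshev (or a Poisson approximation) yields at least one — in fact many — \whp. As at most one of them is the infinite cluster, at least one is finite and produces the desired point.

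The main obstacle is the lower bound, in two respects. First, the elder-rule bookkeeping must guarantee that the \emph{small} birth $\le\rho_+$ is the value actually recorded; the argument is arranged so that the isolated tight cluster is the younger (dying) partner, which is automatic since a cluster dies exactly when it merges with an older one. Second, the second-moment estimate must genuinely control dependence between nearby candidate configurations; restricting to configurations whose isolation balls are disjoint (e.g. via a grid on the torus) renders the covariance terms negligible and lets Chebyshev close the argument. The upper bound, by contrast, reduces to two routine first-moment/extreme-value estimates once one observes that finite deaths never exceed the connectivity radius.
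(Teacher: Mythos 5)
Your proposal is correct and follows essentially the same route as the paper's proof: a Mecke-formula first-moment bound on tight $k$-point clusters to lower-bound the minimal birth radius, the connectivity threshold of Theorem~\ref{thm:rg_conn} to upper-bound all finite death radii, and, for the lower bound, a first- and second-moment count of small clusters isolated out to radius $R$ (the paper's $\hat N_k(r,R)$ with $nR^d = C$ constant, versus your $nR^d \to 0$ --- an immaterial difference absorbed by the $n^{\pm\eps}$ window). Your explicit handling of the elder-rule bookkeeping and of the at-most-one infinite class is a point the paper leaves implicit, but it is the same argument.
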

\begin{rem}
We observe that the largest $\pi$-value in the $k$-cluster persistence, is significantly larger than that of the $k$-dimensional homology. The main reason for that is the following. In \cite{bobrowski_maximally_2017}, our upper bound for $\Pi_{k,\max}$ used an iso-perimetric inequality, which implies that large $\pi$-values require large connected components. However, the $\pi$-values in the $k$-cluster persistence, only require a cluster of size $k$ to be formed, and thus can be generated by much smaller connected components. 
\end{rem}



\appendix
\section*{Appendix}
In this appendix we provide the proofs for the statements made in Section \ref{sec:prob}.

\section{Connectivity}
\begin{proof}[Proof of Theorem \ref{thm:pt_er}]
  
    Note that for the $k$-cluster filtration, connectivity is equivalent to  the original $G(n,p)$ graph having no components of size $j$ for any $k\le j \le n/2$. Let $N_j = N_j(n,p)$ be the number of components of size $j$ in $G(n,p)$. 
    Taking similar steps to the proof of connectivity for random graphs (e.g., \cite{frieze_introduction_2016}), we have
    \eqb\label{eqn:N_j_bound}
        \mean{N_j} \le \binom{n}{j} j^{j-2}p^{j-1}(1-p)^{j(n-j)}. 
    \eqe
    For $k+1\le j< 4k$, we have 
    \[
        \mean{N_j}\le C n^j \param{\frac{\Lambda}{n}}^{j-1} e^{-j(n-4k)(\Lambda/n)},
    \]
    for some $C>0$. Taking  $\Lambda = \frac1k (\log n + (k-1)\logg n)+c$, we have
    \[
        \mean{N_j} \le C n^{-1/k} (\log n)^{j-1} e^{4j\log n /n }.
    \]
    For $4k \le j \le n/2$, we have
    \[
        \mean{N_j} \le \param{\frac{ne}{j}}^j j^{j-2} \param{\frac{\Lambda}{n}}^{j-1} e^{-j\Lambda/2}\le  \frac{n}{j^2} \param{\frac{e^{1-c/2}\log n}{n^{1/2k}}}^j \le \frac{n}{j^2} n^{-j/3k}.
    \]
    Therefore,
    \[
    \sum_{j=4k}^{n/2} \mean{N_j} \le \frac{1}{8k^2} n^{-1/3}.
    \]    
    To conclude, we showed that 
    \[
        \limninf \sum_{j=k+1}^{n/2} \mean{N_j}  = 0.
    \]
    This implies that for $\Lambda = \frac1k(\log n + (k-1)\logg n)+c$, we have
    \[
        \prob{G^{(k)}(n,p) \text{ is connected}} \approx \prob{N_k > 0}.
    \]
    Similar estimates to the ones above, show that
    \[
    \mean{N_k} \approx e^{-kc}.
    \]
    Therefore, when $c=w(n)\to\infty$, we have $\probx{N_k>0} \to 0$. Together with a second-order argument, we can similarly show that when $c=-w(n)$, we have $\prob{N_k>0} \to 1$.
This concludes the proof. 
\end{proof}

\begin{proof}[Proof of Proposition \ref{prop:N_k}]
Recall that $N_k$ is the number of components of size $k$ in $G(n,r)$. 
In \cite{penrose_k_2022}[Theorem 3.3], it was shown that 
\[
\mean{N_k}\approx \var{N_k} \approx C_kn \Lambda^{-(d-1)(k-1)}e^{-\Lambda},
\]
for some constant $C_k>0$. When $\Lambda = \log n -(d-1)(k-1)\logg n + w(n)$, we have $\mean{N_k} \to 0$, implying that $\prob{N_k>0} \to 0$. When $\Lambda = \log n -(d-1)(k-1)\logg n -w(n)$, we can use Chebyshev's inequality,
\[
\prob{N_k=0} \le \prob{|N_k-\mean{N_k}| \ge \mean{N_k}} \le \frac{\var{N_k}}{(\mean{N_k})^2} \approx \frac1{\mean{N_k}} \to 0.
\]
This completes the proof.
\end{proof}

\section{Maximal \texorpdfstring{$\pi$}{p}-value}

\begin{proof}
Let $r,R$ denote the birth and death radii of a cluster in the $k$-cluster persistence diagram, and recall that $\pi = R/r$.

For an upper bound, we denote by $N_k(r)$ the number of connected subsets of size $k$, at radius $r$. Using Mecke's formula (cf. \cite{penrose_random_2003}),
\[
\splitb
\mean{N_k(r)} &= \frac{n^k}{k!}\int_{(\T^d)^k} \indf{G(\bx,r)\text{ is connected}}d\bx\\
&=\frac{n\lambda^{k-1}}{k!} \int_{(\R^d)^{k-1}}\indf{G((0,\by),1)\text{ is connected}}d\by,\\
&=C_k n\lambda^{k-1}, 
\splite
\]
where $C_k$ is a positive constant, $\lambda = nr^d$, and we used the change of variables $x_i \to x_1+ry_i$ ($i=2,\ldots,k$).
For any $\eps>0$, if $\lambda = n^{-1/(k-1)-\eps}$ then $\mean{N_k(r)}\to 0$. Thus, we can assume that with high probability the birth time of all $k$-clusters has $\lambda \ge n^{-1/(k-1)-\eps}$.
In addition, from Theorem \ref{thm:rg_conn}, if we denote $\Lambda = nR^d$, then when $\Lambda = C\log n$ the graph $G(n,r)$ is connected. This implies that with high probability all death times of $k$-clusters have $\Lambda \le C\log n$.

These bounds together imply that with high probability, for all the points in the $k$-cluster persistence diagram, for any $\eps>0$, we have
\[
\pi = \param{\frac{\Lambda}{\lambda}}^{1/d} \le \param{\frac{C\log n}{n^{-1/(k-1)-\eps}}}^{1/d}.
\]
Therefore, for any $\eps>0$, we have
\[
\limninf\prob{\Pi_{\max}^{(k)} \le n^{\frac{1}{d(k-1)}+\eps}} = 1.
\]

For the lower bound, we denote by $\hat N_k(r,R)$ the number of components of size  $k$, born before $r$, that are isolated at radius $R$ (and hence die after $R$). Then
\[
\splitb
\meanx{\hat N_k(r,R)} &= \frac{n^k}{k!}\int_{(\T^d)^k}\indf{G(\bx,r)\text{ is connected}} e^{-n\vol(B_R(\bx))}d\bx,\\
&=\frac{n\lambda^{k-1}}{k!}\int_{(\R^d)^{k-1}}\indf{G((0,\by),1)\text{ is connected}} e^{-n\vol(B_R(0,r\by))}d\by,\\
\splite
\]
where $B_R(\bx)$ is the union of balls of radius $R$ around $\bx$.
We will apply the dominated convergence theorem, using the fact that when $r/R\to 0$, we have
\[
\limninf \frac{\vol(B_R(0,0+r\by))}{\omega_d R^d} = 1.
\]
This leads to
\[
 {\meanx{\hat N_k(r,R)}}\approx C_k n\lambda^{k-1}e^{-\omega_d\Lambda}.
\]
Taking $\Lambda = C>0$, and $\lambda = n^{-1/(k-1)+\eps}$, we have
\[
\meanx{\hat N_k(r,R)} \to \infty.
\]
Using a second moment argument will show that for all $\eps>0$
\[
\prob{\Pi_{\max}^{(k)} \ge n^{\frac1{d(k-1)} -\eps}} \to 1,
\]
completing the proof.
\end{proof}

\section{Limiting Persistence Diagram}

The key part of the proof in \cite{hiraoka_limit_2018}, is bounding the add-one cost of the \emph{persistent} Betti numbers. Let $G=(V,E,W)$ be a weighted graph, and $\{G^{(k)}_t\}$ be the corresponding $k$-cluster filtration.
Define $\beta_0^{r,s}(G^{(k)})$ as the $0$-th persistent Betti number, i.e., the number of components born in $t\in [0,r]$ and die at $(s,\infty]$ (for a formal definition, see \cite{hiraoka_limit_2018}). Fix an edge $e_0\not\in E$, with a given weight $W(e_0) = w_0$, and let $\tilde G = (V,\tilde E, \tilde W)$ be a weighted graph with $\tilde E = E\cup\{e_0\}$, and 
\[
\tilde W(e) := \begin{cases}
    W(e) & e\ne e_0,\\
    w_0 & e = e_0.
\end{cases}
\]
Let $\{\tilde G^{(k)}_t\}$ denote the corresponding $k$-cluster filtration.
The entire proof Theorem \ref{thm:limit_pd} follows verbatim from the proofs in \cite{hiraoka_limit_2018}, provided that we prove the following lemma.

\begin{lem}
$$\left| \beta_0^{r,s}(\tilde G^{(k)}) - \beta_0^{r,s}(G^{(k)})\right| \leq 1.$$
\end{lem}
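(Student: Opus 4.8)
The plan is to \emph{not} treat $\tilde G^{(k)}$ as a single-simplex insertion — it is not, since adding $e_0$ shifts the birth times $\tau_k$ of many vertices — and instead to work directly with the level-$s$ graphs and the standard $0$-dimensional description of persistent Betti numbers. First I would record the monotonicity already used in the paper: adding an edge can only enlarge the connected components of $\cG^*_t$, so the tilde vertex function satisfies $\tilde\tau_k \le \tau_k$ pointwise, and hence $G^{(k)}_s \subseteq \tilde G^{(k)}_s$ at every level. I would then invoke the elementary identity that, for $0$-dimensional homology, $\beta_0^{r,s}$ equals the number of connected components of the level-$s$ graph that contain at least one vertex born by time $r$. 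It therefore suffices to compare, for the two filtrations, the partition into components at level $s$ together with the born-by-$r$ vertex sets $V_r \subseteq \tilde V_r$.

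The heart of the argument is a structural claim about the extra subgraph $R := \tilde G^{(k)}_s \setminus G^{(k)}_s$, consisting of $e_0$ together with the \emph{accelerated} vertices (those with $\tilde\tau_k(v)\le s<\tau_k(v)$) and the accelerated edges. Every accelerated vertex has $\tau_k(v)>s$, so $R$ is vertex-disjoint from $G^{(k)}_s$. I claim the only edge joining $R$ to $G^{(k)}_s$ is $e_0$ itself: any ordinary edge $e\in E$ present at level $s$ with an accelerated endpoint $v$ and an endpoint $u$ active in $G$ satisfies $W(e)\le s$, so in $\cG^*_s$ it connects $v$ to the size-$\ge k$ component of $u$, forcing $\tau_k(v)\le s$ and contradicting that $v$ is accelerated. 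The edge $e_0\notin E$ escapes this argument precisely because it is the new edge. Consequently $R$ attaches to $G^{(k)}_s$ solely through the endpoints $a,b$ of $e_0$, so passing from $G^{(k)}_s$ to $\tilde G^{(k)}_s$ fuses at most the two components $C_a,C_b$ of $G^{(k)}_s$ containing $a$ and $b$ (and adds new vertices), all landing in a single resulting component $M$; every other component of $G^{(k)}_s$ is untouched, with the same born-by-$r$ status in both filtrations.

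With this in hand the count is a short case check. Write $j\in\{0,1,2\}$ for the number of touched components among $\{C_a,C_b\}$ that meet $V_r$, and $\nu\in\{0,1\}$ for whether $R$ contributes a vertex of $\tilde V_r$. The untouched components contribute equally to both persistent Betti numbers, while the fused block $M$ contributes $\ind\{j\ge 1 \text{ or } \nu=1\}$ to $\beta_0^{r,s}(\tilde G^{(k)})$ against the $j$ contributed by $C_a,C_b$ to $\beta_0^{r,s}(G^{(k)})$. Hence $\beta_0^{r,s}(\tilde G^{(k)}) - \beta_0^{r,s}(G^{(k)}) = \max(\ind\{j\ge1\},\nu) - j$, which lies in $\{-1,0,1\}$ for every $j\le 2$, proving the bound.

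The main obstacle, and the reason a naive argument fails, is exactly that $\tilde G^{(k)}$ is not a one-simplex perturbation of $G^{(k)}$: the edge insertion and the cascade of accelerated births are two separate effects, each able to change $\beta_0^{r,s}$ by one, so a triangle-inequality split through an intermediate filtration would only yield $2$. The real work is thus the structural claim of the second paragraph — that the accelerated region can reach the rest of the graph \emph{only} through the single bridge $e_0$ — since this is what couples the two effects, limits the fusion to at most two components, and forces them to cancel rather than add.
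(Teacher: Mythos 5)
Your route --- computing $\beta_0^{r,s}$ as the number of components of the level-$s$ graph meeting the born-by-$r$ vertex set, and controlling how $e_0$ plus the accelerated vertices can reattach --- is genuinely different from the paper's proof, which instead fixes the insertion weight $w_0$ and does a three-way case analysis on the sizes of the two components $C_u,C_v$ of the endpoints of $e_0$ at time $w_0$ (both $<k$; one $\ge k$; both $\ge k$), tracking which diagram points are deleted, shifted, or killed earlier. Your monotonicity $\tilde\tau_k\le\tau_k$ and your bridge claim (no ordinary edge of $\tilde G^{(k)}_s$ joins an accelerated vertex to a vertex of $G^{(k)}_s$) are correct as proved. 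But there is a genuine gap at the step ``consequently \ldots all landing in a single resulting component $M$.'' Vertex-disjointness plus the bridge claim only show that accelerated clusters can attach to $G^{(k)}_s$ \emph{through $e_0$}; since $e_0$ has just two endpoints, nothing you have written rules out an accelerated cluster of $\tilde G^{(k)}_s$ containing neither $a$ nor $b$, which would then be a brand-new, stranded component of $\tilde G^{(k)}_s$. That configuration is exactly what your count cannot absorb: two stranded accelerated clusters each meeting $\tilde V_r$, with $j=0$, would give $\beta_0^{r,s}(\tilde G^{(k)})-\beta_0^{r,s}(G^{(k)})=+2$. So the step is load-bearing, not cosmetic.

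The missing statement is true, and the fix is short: the passage from $\cG^*_t$ to the weight-sublevel graph with $e_0$ only merges the $\cG^*_t$-components of $a$ and $b$ (for $t\ge w_0$), so a vertex $x$ can have $\tilde\tau_k(x)=t<\tau_k(x)$ only if at time $t$ it lies in that merged union and the union has at least $k$ vertices; but then \emph{every} vertex of the union gets $\tilde\tau_k\le t$, and all edges on the connecting paths have weight $\le t$, so the whole union, including $a$ and $b$, is a single connected subgraph of $\tilde G^{(k)}_t\subseteq\tilde G^{(k)}_s$. This rules out stranded clusters. Note you need this same fact a second time, for your assertion that untouched components have identical born-by-$r$ status: a vertex with $\tilde\tau_k(x)\le r<\tau_k(x)\le s$ lies in $G^{(k)}_s$ and in $\tilde V_r\setminus V_r$, and only the lemma above (applied at its acceleration time $\le r$), combined with your bridge claim, places it in $C_a\cup C_b$ rather than in an untouched component. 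Relatedly, your $\nu$ is mis-specified: $M$ can meet $\tilde V_r$ through such a vertex of $C_a\cup C_b$ even when $j=0$ and no vertex of $R$ lies in $\tilde V_r$. This last point is harmless --- replacing $\max\paren{\ind\set{j\ge1},\nu}$ by $\ind\set{M\cap\tilde V_r\ne\emptyset}$, which still lies in $\{0,1\}$ and dominates $\ind\set{j\ge1}$, leaves your case check and the bound $\abs{\beta_0^{r,s}(\tilde G^{(k)})-\beta_0^{r,s}(G^{(k)})}\le 1$ intact --- but the stranded-cluster issue must be closed for the proof to stand.
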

In other words, if we add a single edge to the filtration, the number of persistent clusters can change by at most $1$. Note that the proof here is not a straightforward application of Lemma 2.10 in \cite{hiraoka_limit_2018}, since in our case, when a single edge is added to the filtration, the filtration values of other vertices and edges might be affected.

\begin{proof}

Let $e_0=(u,v)$ with $W(e_0)=w_0$. Let $C_u$ and $C_v$ denote the components of the end points of $e_0$ at $w_0$ in the original filtration $\{G_t^{(k)}\}$. There are three possible cases which can occur. 

\noindent {\bf Case I:} Both $|C_u|<k$ and $|C_v| < k$. Note that in this case $\tau_k(u),\tau_k(v) > w_0$. Let $C'_u$ be the cluster of $u$ at $\tau_k(u)$, so that it is the component of $u$ when it first appears in $G^{(k)}_t$. Similarly define $C'_v$. Note that aside from $C'_u\cup C'_v$ the filtration value of all other vertices remains unchanged by adding $e_0$.

Without loss of generality, suppose that $w_0 < \tau_k(u) < \tau_k(v)$. Then comparing the persistence diagram for $G_t^{(k)}$ and $\tilde G_t^{(k)}$, only two differences can occur:
\begin{enumerate}
    \item The point representing $C'_v$ in $G_t^{(k)}$ is removed, since $C'_v$ is no longer a connected component in $\tilde G_t^{(k)}$ (as it is merged with $C'_u$).
    \item The point representing $C'_u$ in $G_t^{(k)}$ may get an earlier birth time in $\tilde G_t^{(k)}$, in the interval $[w_0, \tau_k(u))$.
\end{enumerate}
For a given $r,s$, the first change might decrease $\beta_k^{r,s}$ by $1$, while the second change might increase it by $1$. In any case, the total difference between $\beta_0^{r,s}(\tilde  G^{(k)})$ and $\beta_0^{r,s}(G^{(k)})$ is no more than one.

\noindent {\bf Case II:} $|C_u|\ge k$, and $|C_v| < k$. Defining $C'_v$ the same as above, note in this case the filtration value of all points outside $C'_v$ will remain unchanged by adding $e_0$. The only change that will occur in this case is that the point in the diagram of $G_t^{(k)}$ corresponding to $C'_v$ will be removed in $\tilde G_t^{(k)}$, since it is now merged with $C_u$. Therefore, the difference in the persistent Betti numbers is at most 1.

\noindent {\bf Case III:} Both $|C_u|\ge k$ and $|C_v|\ge k$. If $C_u=C_v$ then adding $e_0$ creates a $1$-cycle (loop) and does not affect the $k$-cluster persistence diagram.
If $C_u\ne C_v$, then
 both $C_u$ and $C_v$ are represented by different points in the persistence diagram of $G_t^{(k)}$. Adding $e_0$ will cause one of these components to die earlier. In this case $\beta_0^{r,s}$ may be decreased by $1$ (if $s>w_0$).
 
\end{proof}

\bibliographystyle{plain}
\bibliography{zotero}

\begin{thebibliography}{10}

\bibitem{bobrowski_maximally_2017}
Omer Bobrowski, Matthew Kahle, and Primoz Skraba.
\newblock Maximally persistent cycles in random geometric complexes.
\newblock {\em The Annals of Applied Probability}, 27(4):2032--2060, 2017.

\bibitem{bobrowski_topological_2017}
Omer Bobrowski, Sayan Mukherjee, and Jonathan~E. Taylor.
\newblock Topological consistency via kernel estimation.
\newblock {\em Bernoulli}, 23(1):288--328, February 2017.

\bibitem{bobrowski_universal_2023}
Omer Bobrowski and Primoz Skraba.
\newblock A universal null-distribution for topological data analysis.
\newblock {\em Scientific Reports}, 13(1):12274, 2023.
\newblock Publisher: Nature Publishing Group UK London.

\bibitem{carlsson_classifying_2013}
Gunnar Carlsson and Facundo Mémoli.
\newblock Classifying clustering schemes.
\newblock {\em Foundations of Computational Mathematics}, 13:221--252, 2013.
\newblock Publisher: Springer.

\bibitem{carlsson_characterization_2010}
Gunnar~E. Carlsson and Facundo Mémoli.
\newblock Characterization, stability and convergence of hierarchical
  clustering methods.
\newblock {\em J. Mach. Learn. Res.}, 11(Apr):1425--1470, 2010.

\bibitem{chazal_persistence-based_2013}
Frédéric Chazal, Leonidas~J. Guibas, Steve~Y. Oudot, and Primoz Skraba.
\newblock Persistence-based clustering in {Riemannian} manifolds.
\newblock {\em Journal of the ACM (JACM)}, 60(6):41, 2013.

\bibitem{cormen_introduction_2022}
Thomas~H. Cormen, Charles~E. Leiserson, Ronald~L. Rivest, and Clifford Stein.
\newblock {\em Introduction to algorithms}.
\newblock MIT press, 2022.

\bibitem{edelsbrunner_computational_2010}
Herbert Edelsbrunner and John~L. Harer.
\newblock {\em Computational topology: an introduction}.
\newblock AMS Bookstore, 2010.

\bibitem{erdos_random_1959}
Paul Erdős and Alfréd Rényi.
\newblock On random graphs.
\newblock {\em Publicationes Mathematicae Debrecen}, 6:290--297, 1959.

\bibitem{frieze_introduction_2016}
Alan Frieze and Micha{\textbackslash}l Karoński.
\newblock {\em Introduction to random graphs}.
\newblock Cambridge University Press, 2016.

\bibitem{hatcher_algebraic_2002}
Allen Hatcher.
\newblock {\em Algebraic topology}.
\newblock Cambridge University Press, Cambridge, 2002.

\bibitem{hiraoka_limit_2018}
Yasuaki Hiraoka, Tomoyuki Shirai, and Khanh~Duy Trinh.
\newblock Limit theorems for persistence diagrams.
\newblock {\em The Annals of Applied Probability}, 28(5):2740--2780, 2018.

\bibitem{jain_data_1999}
Anil~K. Jain, M.~Narasimha Murty, and Patrick~J. Flynn.
\newblock Data clustering: a review.
\newblock {\em ACM computing surveys (CSUR)}, 31(3):264--323, 1999.
\newblock Publisher: Acm New York, NY, USA.

\bibitem{lesnick_interactive_2015}
Michael Lesnick and Matthew Wright.
\newblock Interactive visualization of 2-{D} persistence modules.
\newblock {\em arXiv preprint arXiv:1512.00180}, 2015.

\bibitem{mcinnes_umap_2018}
Leland McInnes, John Healy, and James Melville.
\newblock Umap: {Uniform} manifold approximation and projection for dimension
  reduction.
\newblock {\em arXiv preprint arXiv:1802.03426}, 2018.

\bibitem{munkres_elements_1984}
James~R. Munkres.
\newblock {\em Elements of algebraic topology}, volume~2.
\newblock Addison-Wesley Reading, 1984.

\bibitem{pedregosa_scikit-learn_2011}
Fabian Pedregosa, Gaël Varoquaux, Alexandre Gramfort, Vincent Michel, Bertrand
  Thirion, Olivier Grisel, Mathieu Blondel, Peter Prettenhofer, Ron Weiss, and
  Vincent Dubourg.
\newblock Scikit-learn: {Machine} learning in {Python}.
\newblock {\em the Journal of machine Learning research}, 12:2825--2830, 2011.
\newblock Publisher: JMLR. org.

\bibitem{penrose_random_2003}
Mathew Penrose.
\newblock {\em Random geometric graphs}, volume~5.
\newblock Oxford University Press Oxford, 2003.

\bibitem{penrose_longest_1997}
Mathew~D. Penrose.
\newblock The longest edge of the random minimal spanning tree.
\newblock {\em The Annals of Applied Probability}, pages 340--361, 1997.

\bibitem{penrose_k_2022}
Mathew~D. Penrose and Xiaochuan Yang.
\newblock On k -clusters of high-intensity random geometric graphs.
\newblock {\em arXiv preprint arXiv:2209.14758}, 2022.

\bibitem{skraba_randomly_2017}
Primoz Skraba, Gugan Thoppe, and D.~Yogeshwaran.
\newblock Randomly {Weighted} d-complexes: {Minimal} {Spanning} {Acycles} and
  {Persistence} {Diagrams}.
\newblock {\em arXiv:1701.00239}, 2017.

\bibitem{zomorodian_topology_2005}
Afra~J. Zomorodian.
\newblock {\em Topology for computing}, volume~16.
\newblock Cambridge university press, 2005.

\end{thebibliography}


\end{document}